\documentclass{article}
\usepackage{mypkg}
\usepackage{utfmaths}
\newcommand{\Johnson}{\mathcal{J}}

\newcommand{\GenPhrase}{jumbled Johnson graph}

\title{Model theory of Johnson Graphs}
\begin{document}
\maketitle
\begin{abstract}

A \emph{Johnson graph} $J(n,k)$ is a graph whose vertices are the $k$-element subsets of an underlying set of size $n$ where two vertices are adjacent if they intersect in a subset of size $k-1$.
We extend the definition to allow infinite parameters and
we study a natural limit theory $T_{\Johnson}$ of Johnson graphs. We show that
$T_{\Johnson}$ is complete and we give an axiomatisation of $T_{\Johnson}$, from which
we obtain a complete description of the models of $T_{\Johnson}$.

We use this to show tameness properties of $T_{\Johnson}$, namely that $T_{\Johnson}$ is $\omega$-stable of Morley rank $\omega$, but not monadically dependent.
In particular it follows that the class of all finite Johnson graphs is stable, and therefore dependent but not monadically dependent.


\end{abstract}

\section{Introduction}
Johnson graphs are an easily-described $2$-parameter family of graphs with very strong symmetry and regularity properties.
The goal in this paper is to describe, from the viewpoint of model theory, the first order logic of these graphs as \emph{both} parameters grow.

A \emph{Johnson graph},  $J(n,k)$ is a graph whose vertices can be identified with the $k$-element subsets of an set of size $n$ with two vertices adjacent if they intersect in a subset of size $k-1$. We refer to the set of size $n$ as the \emph{underlying set} of the Johnson graph.
It is immediate that the symmetric group $\operatorname{Sym}(n)$ on the underlying set acts as
a group of automorphisms of the Johnson graph, and in fact if $n>2k$ then $\operatorname{Sym}(n)$  is the full automorphism group \cite{AutomorphismGroupOfJohnsonGraph}.
As a result, Johnson graphs are the most easily described non-trivial examples of \emph{distance-transitive} graphs, that is, graphs whose automorphism group is transitive on the set of ordered pairs
of vertices at any given distance.
Johnson graphs have been extensively explored from the viewpoint of graph theory and permutation group theory, but little from that of model theory.
In \cite{AdlerBM22} the VC-dimension and the VC-density of the edge relation on Johnson graphs are studied.
It is there shown that on the class of all finite Johnson graphs the edge relation has VC-dimension $4$ and VC-density $2$.

It is easily seen that for fixed $k$, the Johnson graph $J(n,k)$ is uniformly (as $n$ varies) bi-interpretable with a pure set of size $n$.
Thus $J(ω,k)$, a Johnson graph with a countably infinite underlying set and vertices of size $k$, has a totally categorical theory (of Morley rank $k$).
In this paper, we explore the limit theory as $n,k,n-k$ all grow.
One might expect the limit theory to behave more like the Boolean algebra of subsets of $ω$, which is far from stable.
However, we show that this limit theory is complete and $\omega$-stable of Morley rank $\omega$, and describe its models.
We see this as a first step towards exploring limit theories of other families of finite distance-transitive graphs.

In this paper we will use a natural extension of the definition of Johnson graphs that allows infinite parameters.
\begin{definition}
\label{InfJohnsonDef}
Let $κ$ and $λ$ be cardinals such that $ω ≤\lambda\leq κ$.
The \emph{Johnson graph} $J(κ,λ)$ is a graph whose vertices are all the size $λ$ subsets $A$  of an underlying set $X$ of size $κ$ with the complement $X∖A$ of size $κ$
such that
two vertices $u$ and $v$ are adjacent if and only if $|u∖v|=|v∖u|=1$.
\end{definition}

In the above definition we fix the size of the complements of vertices.
We do this to make sure that all of the vertices have complements of the same cardinality.
This is to ensure that the graphs have vertex transitive automorphism groups, in particular, that the symmetric group on the underlying set $X$ acts vertex transitively.
An equivalent definition is obtained by fixing the size of the vertices to be $κ$ and the complements of vertices to have size $λ$.
The choice between the two definitions is arbitrary, and our choice is analogous to using the natural isomorphism between  $J(n,k)$ and $J(n,n-k)$ to assume $n≥2k$.

For completeness we include here the usual definition of Johnson graphs, which covers finite parameters.
\begin{definition}
\label{JohnsonDef}
Let $n$ and $k$ be natural numbers such that $k≤n$.
The \emph{Johnson graph} $J(n,k)$ is a graph whose vertices are all the size $k$ subsets $A$  of an underlying set $X$ of size $n$
such that
two vertices $u$ and $v$ are adjacent if and only if $|u∖v|=|v∖u|=1$.
\end{definition}

Throughout this paper we identify vertices with subsets of the underlying set.

When $k>n$ the definition of $J(n,k)$ degenerates so for any fixed $n$ we get a finite number of Johnson graphs and no interesting limit theory.
So the main issue is when $n,k, n-k \to \infty$.
%
To gain a deeper understanding of the infinite Johnson graphs we will be investigating what their individual connected components look like.
\begin{definition}
	Let $κ$ and $λ$ be  cardinals such that $κ≥λ$.
	Take a set $A⊆X$ such that $|A|=λ$, $|X|=κ$ and $|A|≤|X∖A|$.
	We define a  graph  \emph{$J'(κ,λ)$} in the following way.
	There are vertices for each set of the form $(A∖B)∪C$ where
	$B⊂A$, $C⊂X∖A$ and $|B|=|C|∈ℕ$.
	Note that by choosing $B=C=∅$ we have a vertex for $A$ which we call the central vertex of $J'(κ,λ)$.
	Two sets $u,v$  are adjacent if and only if $|u\symdiff v|=2$.
\end{definition}
Note that for finite $κ$ and $λ$ we have that $J'(κ,λ)=J(κ,λ)$.

We will be working in the language $L$ of graphs, i.e. a language with a single binary relation symbol $E$ which we call the \emph{edge relation}.
We shall use an Ehrenfeucht-Fraïssé game argument to explore the  limit theory of the finite Johnson graphs $J(n,k)$ as $\min\{k,n-k\}\to ∞$, and we denote this theory by $T_{\Johnson}$, i.e.
         $T_{\Johnson}$ is the set of sentences $σ$ in $L$ such that for some $N_{σ}∈ℕ$, if $\min\{n,k,n-k\}≥ N_{σ}$, then $J(n,k)\models σ$.
\begin{theorem}
	\label{EF-Intro}
	Duplicator has a winning strategy in the following Ehrenfeucht-Fraissé games:
	\begin{itemize}
		\item  $EF_{\floor{\log{t}}}(J(n,k),J(n',k'))$ where $t=\floor{\frac{\min(k,n-k,k',n'-k')}{2}}$.
		\item  $EF_{\floor{\log{t}}}(J(n,k),\bigsqcup_{i∈I} J'(κ_i,λ_i))$ where $(κ_i)_{i∈I}$, $(λ_i)_{i∈I}$ are families of infinite cardinals with $ω≤λ_i≤κ_i$ and $t=\floor{\frac{\min(k,n-k)}{2}}$.
	\end{itemize}
\end{theorem}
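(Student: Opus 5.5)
We will play both games in one framework. Every structure in question is a disjoint union of components, and each component is a set system of the form $(A\setminus B)\cup C$ with $|B|=|C|$ finite. Distance between two vertices $u,v$ in the same component is $|u\setminus v|=\tfrac12|u\symdiff v|$. We will have Duplicator maintain, after $r$ rounds with chosen tuples $\bar a$ in the first structure and $\bar b$ in the second, a \emph{local isomorphism of Venn diagrams}.

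Concretely, for a tuple of vertices $a_1,\dots,a_m$ in one component, consider the $2^m$ atoms of the Boolean algebra they generate inside the underlying set. Only atoms lying in the symmetric differences of the $a_i$ are finite; the remaining two atoms are the intersection of all $a_i$ and the common complement. The invariant after round $r$, with $g=\floor{\log t}-r$ rounds remaining, will be the following:
\begin{itemize}
\item[(i)] $a_i,a_j$ are in the same component if and only if $b_i,b_j$ are.
\item[(ii)] Within each component, each atom of the $\bar a$-diagram and the corresponding atom of the $\bar b$-diagram either have the same size, or both have size at least $2^{g}$.
\end{itemize}
For the two infinite atoms in $J(n,k)$, the sizes are about $k$ and $n-k$ minus finite corrections, hence at least $2t$ initially. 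In $J'(\kappa,\lambda)$ they are infinite. Components not yet touched impose no condition. For the first game, a single component appears on each side, namely the whole finite graph, so condition (i) is trivial. In the second game, $J(n,k)$ is connected but $\bigsqcup J'$ is not. Condition (i) will then be handled via distances: two vertices of $J(n,k)$ are at distance at most $t\le k$, and we will want Duplicator never to need to distinguish ``distance greater than $2^g$'' from ``different component.''

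So we refine (ii) to make the invariant apply also to distances between components in the disjoint union: if $b_i,b_j$ lie in different components, then the corresponding atoms in $J(n,k)$ separating $a_i$ from $a_j$ must be large, namely of size at least $2^g$. The initial condition $2^{\floor{\log t}}\le t$ together with the bound $\min(k,n-k)\geq 2t$ gives room for this.

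\textbf{The extension step.} Suppose Spoiler picks a new vertex $a$. The new vertex $a$ is determined by which finite sets of points it removes from and adds to each atom of the old diagram, relative to some already chosen $a_i$ in its component. If there is no such $a_i$, $a$ starts a new component. Duplicator mimics this on the other side. In each atom, if the old atoms have equal size, Duplicator copies exactly. If both old atoms have size at least $2^g$, then the new sub-atoms have sizes $s$ and $N-s$ with $N\geq 2^g$. Duplicator picks $s'$ so that each pair of new sizes is either equal or both at least $2^{g-1}$. This is the standard halving argument from the EF game for linear orders and pure sets. When Spoiler starts a fresh component in $\bigsqcup J'$, Duplicator picks a vertex of $J(n,k)$ far from all chosen vertices. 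Conversely, when Spoiler picks in $J(n,k)$ a vertex far from every $a_i$, meaning its connecting atoms all exceed $2^{g-1}$, Duplicator may answer in a fresh component or far away within the same one.

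\textbf{Consistency and the win.} The edge relation and equality are determined by the invariant when $g\ge1$: adjacency is equivalent to the atom sizes of the pair being $(1,1)$, and equality to them being zero. So a partial isomorphism persists through all $\floor{\log t}$ rounds.

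\textbf{Main obstacle.} The hard step is bookkeeping the constraint that in $J(n,k)$ all $2^m$ atoms must sum to exactly $n$, with the vertices of size exactly $k$. In the infinite case the two big atoms absorb everything. Duplicator's choice of $s'$ in the big atoms must keep the finite-atom totals consistent with $|b|=k'$, or with finite change from the central vertex. We will try to resolve this by always letting the two ``infinite'' atoms, the intersection and the common complement, absorb any discrepancy. We then need to verify that these two atoms have size at least $2t-\sum(\text{finite atoms})\ge 2^{g}$ throughout, using that at most $\floor{\log t}$ vertices are chosen and each finite atom is bounded by $2^{\floor{\log t}}\le t$.
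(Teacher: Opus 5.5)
\textbf{There is a genuine gap.} Your invariant (ii) is a \emph{global} Venn-diagram invariant, and it cannot be maintained when $k\neq k'$.

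Here is a concrete failure. Take $n\ge 2k$ with $k$ even, $k'$ odd, and $t\ge 8$, so that $\lfloor\log t\rfloor\ge 3$.
\begin{itemize}
\item Spoiler plays $a_1$, then $a_2$ with $|a_1\cap a_2|=k/2$, then $a_3=a_1\symdiff a_2$. This set has size $k$, so it is a legal vertex.
\item After round 3, the atoms $a_1\cap a_2\cap a_3$, $a_1\setminus(a_2\cup a_3)$, $a_2\setminus(a_1\cup a_3)$ and $a_3\setminus(a_1\cup a_2)$ are all empty.
\item Since $2^g\ge 1$, your invariant forces the corresponding $b$-atoms to be empty, i.e.\ $b_3=b_1\symdiff b_2$.
\item But $|b_1\symdiff b_2|=2(k'-|b_1\cap b_2|)$ is even, so no legal $b_3$ exists, whatever Duplicator did in round 2.
\end{itemize}
Duplicator can of course still win the actual game here, because $a_3$ is at distance $k/2$ from both $a_1$ and $a_2$. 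The invariant is simply too strong. More generally, the constraints $|a_i|=k$ impose exact linear relations among \emph{large} atoms, and your halving rule does not transfer those relations from $k$ to $k'$.

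Your proposed repair also fails, because it imports intuition from $J'$ that is false in $J(n,k)$:
\begin{itemize}
\item The atoms you call ``infinite'' need not be large. Spoiler can play two disjoint vertices, making $\bigcap a_i=\emptyset$.
\item The remaining atoms are not bounded by $2^{\lfloor\log t\rfloor}$, since $|a_i\setminus a_j|$ can be as large as $k$.
\end{itemize}

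\textbf{The missing idea is to localize the invariant, as the paper does.}
\begin{itemize}
\item Duplicator tracks, via a partial bijection $f$ between the underlying sets, only the symmetric differences of pairs of played vertices that are \emph{close}. The closeness threshold $t/2^{m}$ halves every round, and far-apart pairs impose no condition at all.
\item When Spoiler plays $a_m$, only the cluster $M$ of earlier vertices within distance $t/2^{m+1}$ matters. Inside such a cluster the common intersection and common complement really are large on both sides, at least $k'-t$ and $n'-k'-t$. So they can absorb the $k$ versus $k'$ discrepancy.
\item If the cluster is empty, Duplicator answers with a vertex far from all previous ones. In the finite graph $J(n',k')$, whose diameter is only $k'$, such a vertex need not obviously exist. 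This is exactly what Lemma~\ref{NoCover} supplies, and your sketch never justifies it.
\item At the end, distance at most $1$ lies below the final threshold, so edges and equality are preserved.
\end{itemize}
The same local strategy handles $\bigsqcup_{i} J'(\kappa_i,\lambda_i)$ with no extra bookkeeping, since vertices in different components are simply never close. This also removes the need for your ill-defined cross-component atom conditions.
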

This gives us the following result.
\begin{corollary}
\label{CompleteTheory}
$T_{\Johnson}$ is a complete theory.
\end{corollary}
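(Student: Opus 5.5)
We derive the corollary directly from the first item of Theorem~\ref{EF-Intro}, using the standard correspondence between Ehrenfeucht--Fraïssé games and elementary equivalence up to a given quantifier rank. Recall that $T_{\Johnson}$ is defined as the set of sentences $\sigma$ that hold in $J(n,k)$ whenever $\min\{n,k,n-k\}$ is large enough. Completeness therefore has two parts: $T_{\Johnson}$ is consistent, and for every sentence $\sigma$ of $L$, either $\sigma\in T_{\Johnson}$ or $\neg\sigma\in T_{\Johnson}$.

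The key step runs as follows. Fix a sentence $\sigma$ and let $q$ be its quantifier rank. Choose $N$ so that whenever $t=\floor{\frac{\min(k,n-k,k',n'-k')}{2}}$ satisfies $\min(k,n-k,k',n'-k')\ge N$, we have $\floor{\log t}\ge q$. For instance, $N=2^{q+2}$ works, since then $t\ge 2^{q+1}$. Now take any two Johnson graphs $J(n,k)$ and $J(n',k')$ with $\min\{n,k,n-k\},\min\{n',k',n'-k'\}\ge N$. By Theorem~\ref{EF-Intro}, Duplicator wins $EF_{\floor{\log t}}(J(n,k),J(n',k'))$. A winning strategy for more rounds restricts to one for fewer rounds, so Duplicator also wins the $q$-round game. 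By the Ehrenfeucht--Fraïssé theorem, the two graphs satisfy the same sentences of quantifier rank at most $q$, so in particular $J(n,k)\models\sigma$ if and only if $J(n',k')\models\sigma$.

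It follows that the truth value of $\sigma$ is constant on all $J(n,k)$ with $\min\{n,k,n-k\}\ge N$. If that constant value is true, then $\sigma\in T_{\Johnson}$ with $N_\sigma=N$; otherwise $\neg\sigma\in T_{\Johnson}$. For consistency, note that any finite subset $\{\sigma_1,\dots,\sigma_m\}\subseteq T_{\Johnson}$ holds in $J(n,k)$ once $\min\{n,k,n-k\}\ge\max_i N_{\sigma_i}$, and such graphs exist. By compactness $T_{\Johnson}$ has a model, and it is deductively closed and contains exactly one of $\sigma,\neg\sigma$ for each $\sigma$. Hence it is complete.

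No step here is a genuine obstacle: all the combinatorial difficulty sits in the first item of Theorem~\ref{EF-Intro}, which we are allowed to assume. The only care needed is in the quantitative bookkeeping, namely making sure that $\floor{\log t}\to\infty$ as $\min\{n,k,n-k\}\to\infty$, and in noting that $\min\{n,k,n-k\}=\min\{k,n-k\}$ because $n\ge k$.
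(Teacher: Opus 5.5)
Your proof is correct and follows the paper's argument, which likewise derives completeness from the first item of Theorem~\ref{EF-Intro}: the truth value of each sentence stabilises as $\min\{k,n-k\}\to\infty$. You also make explicit the bookkeeping the paper leaves implicit, namely the choice $N=2^{q+2}$, the restriction to $q$ rounds, and consistency via finite satisfiability.
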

Note that the infinite Johnson graphs of Definition 1.0.1 can be written as disjoint unions of grahps $J'(κ,λ)$ for fixed $λ$ and $κ$. This suggests the following definition.
\begin{definition}
	\label{GenJDef}
	Let $μ$ be some ordinal.
	A graph $G$ is a \emph{\GenPhrase} if
	$G=\bigsqcup_{i∈μ} J'(κ_i,λ_i)$ where  $(κ_i)_{i∈μ}$  and  $(λ_i)_{i∈μ}$ are sequences  of infinite cardinals such that $λ_i≤κ_i$ for all $i∈μ$.
\end{definition}

From Theorem \ref{EF-Intro} it follows that {\GenPhrase}s are models of $T_{\Johnson}$.
We then move on to axiomatizing $T_{\Johnson}$, and for simplicity we do this in two steps.
First we work in an extension $L_c$ of the language $L$ of graphs that also has three constant symbols and give a set $Σ_c$ of axioms in $L_c$ which hold of all connected models of $T_{\Johnson}$.
We then show that by treating those constant symbols like free variables and binding each with a universal quantifier we obtain a complete set $Σ$  of axioms in $L$ for $T_{\Johnson}$.
This gives us the following results.
\TODO[LOOK into labeling, this is Theorem LAxThm and corollaries GenJohnsonThm and TJAxThm]
\begin{theorem}\label{AxiomResultsIntro}
	\begin{enumerate}[before=\leavevmode\vspace{-\baselineskip}\vspace{-\topsep}]
	\item
		A connected graph $M$ has $M\models Σ $ if and only if $M\cong J'(κ,λ)$ for some infinite cardinals $κ≥λ$.
	\item
		A model $M$ satisfies $M\models Σ$ if and only if $M$ is a \GenPhrase.
	\item
		$Σ$ is a complete set of axioms for $T_{\Johnson}$.
	\end{enumerate}
\end{theorem}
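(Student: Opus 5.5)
The plan is to prove the three parts in order, each feeding the next. The set $Σ$ will be chosen so that (1) is essentially a recognition theorem for a single component, (2) follows by working component-wise, and (3) follows from (2) together with Theorem~\ref{EF-Intro} and Corollary~\ref{CompleteTheory}.

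For (1) I will first fix $Σ_c$ in $L_c$, with constants $a$, $b$, $c$, and write down the local features of $J'(κ,λ)$ near a central vertex $a$. The intended interpretation is $b,c$ adjacent to $a$ with $a\setminus b$ and $c\setminus a$ a prescribed configuration: the two constants encode one "removed" point and one "added" point.

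The relevant local structure is as follows. The neighbourhood of any vertex is a rook's graph $K_λ\square K_κ$, so it splits into "rows" and "columns" in a definable way. Two distinct maximal cliques through $a$ are either of type $\{(A\setminus\{x\})\cup\{y\}: y\}$ or of type $\{(A\setminus\{x\})\cup\{y\}: x\}$. Common neighbours of vertices at distance $2$ number exactly $4$, and the $μ$-graphs have the usual Johnson structure. These give first-order axioms, one per $n$, expressing that the distance-$n$ sphere around $a$ looks like pairs ($n$-subset of $A$, $n$-subset of $X\setminus A$) with the right adjacency. The Johnson-type local conditions (every edge in exactly two maximal cliques, $\mu$-graphs being $4$-cycles, and so on) are the standard characterisations of Johnson graphs in the finite case. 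Since $J'$ and $J$ agree on finite parameters and every finite configuration is covered by the finite case, these sentences hold in $J(n,k)$ for $n,k,n-k$ large.

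Given a connected $M\models Σ$, I will reconstruct the underlying sets. Let $P$ be the maximal cliques of one type through $a$, identified with points of $A$, and $Q$ the cliques of the other type, identified with points of $X\setminus A$. The constants $b,c$ are used to define which clique family is which. Then, by induction on the distance $d(a,v)$, I will show each $v$ determines unique finite $B\subseteq P$ and $C\subseteq Q$ with $|B|=|C|=d(a,v)$, giving a map to $J'(|Q|+|P|,|P|)$, after swapping if necessary to get $λ\le κ$. The axioms are exactly those needed to make this map a well-defined bijection preserving adjacency. Infinitude of $P$ and $Q$ comes from the axioms saying cliques have size $\ge m$ for all $m$.

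The main obstacle is this inductive step: showing the map is well defined and injective at distance $d+1$. Concretely, a vertex at distance $d+1$ has exactly $(d+1)^2$ neighbours at distance $d$, and they force a consistent choice of $B$ and $C$. I would first try to express this via axioms about the $μ$-graph structure, in the style of the uniqueness argument for the finite Johnson graph characterisation. Universally quantifying the constants then gives $Σ$, which is satisfied by $M$ exactly when every vertex can serve as a centre. This holds in $J'$ by vertex transitivity of each component.

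For (2), a model of $Σ$ is the disjoint union of its connected components. $Σ$ consists of universal statements over the constants, and each axiom only speaks about a bounded neighbourhood, hence within one component. So each component satisfies $Σ$, and by (1) it is some $J'(κ_i,λ_i)$. Conversely, a jumbled Johnson graph satisfies each axiom component-wise.

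For (3), every sentence of $Σ$ is in $T_{\Johnson}$ by the finite-verification remark above. Conversely, any model of $Σ$ is a jumbled Johnson graph by (2), and by Theorem~\ref{EF-Intro} it is elementarily equivalent to $J(n,k)$ up to quantifier depth $\lfloor\log t\rfloor$ for arbitrarily large $t$. It therefore satisfies $T_{\Johnson}$, so $Σ\vdash T_{\Johnson}$.
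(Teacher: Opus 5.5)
Your outline follows the paper's architecture. You use constants naming a vertex and a triangle to orient the rook's graph on $N(v)$, take the rows and columns of $N(v)$ as the two halves of the underlying set, add one group of axioms per radius, take the universal closure, and finish with components and the Ehrenfeucht--Fra\"iss\'e theorem. However, the step you single out as the main obstacle is left open, and the route you suggest for it cannot work. You never write down first-order sentences that force the sphere $D_n$ to have the shape of pairs of $n$-sets. Instead you hope to get well-definedness and injectivity at distance $d+1$ from local conditions: rook's-graph neighbourhoods, $\mu$-graphs that are $4$-cycles, each edge in exactly two maximal cliques. Such conditions do not characterise Johnson graphs even among finite graphs: for $m\ge 5$ the folded graph $J(2m,m)/(S\sim X\setminus S)$ is locally an $m\times m$ rook's graph with $\mu=4$.

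The same failure occurs with infinite parameters. Let $X=\mathbb{Z}\times\{0,\dots,2r-1\}$, let $A$ be the union of the negative halves of the even layers and the non-negative halves of the odd layers, and let $\sigma(i,j)=(i+1,j)$. Then $|\sigma A\setminus A|=|A\setminus\sigma A|=r$, so $\sigma$ acts on the component $J'(\omega,\omega)$ of $A$. Moreover $|S\setminus\sigma^kS|\ge r|k|$ for every vertex $S$ of that component. For $k>0$, each odd layer is a finite modification of $\mathbb{Z}_{\ge0}$ and so loses at least $k$ points under a shift by $k$; for $k<0$ the even layers do the same. Hence $Q=J'(\omega,\omega)/\langle\sigma\rangle$ is connected, and its balls of radius $<(r-1)/2$ are isomorphic to those of $J'(\omega,\omega)$. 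So $Q$ satisfies every local axiom of that radius that is true in $J'(\omega,\omega)$. Yet $Q\not\cong J'(\omega,\omega)$. For $r=2m$, the midpoint of a geodesic from $A$ to $\sigma A$ lies at distance $m$ from $[A]$ in $Q$ and has $2m^2$, not $m^2$, neighbours at distance $m-1$. So no axiom system of bounded radius proves part~1.

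What is needed, and what the paper's $\Sigma$ contains, is for each $n$ an axiom that pins down $D_n$ outright. If your ``one axiom per $n$'' was meant in this sense, you still owe the first-order coding, and then the inductive step is no obstacle at all. The paper defines $\phi_n(x,\bar u,\bar w)$ by induction on $n$: $x\in D_n$ is coded by $n$ column representatives $\bar u$ and $n$ row representatives $\bar w$ iff its neighbours in $D_{n-1}$ are exactly the vertices coded by the $n^2$ pairs of $(n-1)$-subtuples. Schema (\ref{BiAx}) says this coding is a bijection from $D_n$ onto pairs of $n$-sets of column and row representatives, and (\ref{Enn}) gives the edges inside $D_n$. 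The edges between $D_{n-1}$ and $D_n$ are then fixed by $\phi_n$, and there are no others.

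Three smaller points remain.
\begin{enumerate}
\item You need the axiom (\ref{LastAx}) that every vertex lies in a triangle. Otherwise the universal closure holds vacuously in $K_1$.
\item Vertex-transitivity does not verify the universal closure in $J'(\kappa,\lambda)$. For $\kappa\ne\lambda$, rows and columns have different sizes, so no automorphism fixing $v$ interchanges them. Use the row/column symmetry of the axioms instead, or, as the paper does, pass to $\kappa=\lambda$ by elementary equivalence.
\item Your finite-verification remark for $\Sigma\subseteq T_{\Johnson}$ is too vague. Argue instead that a \GenPhrase{} satisfies $\Sigma$ and, by Theorem~\ref{EF-Intro} and Corollary~\ref{CompleteTheory}, its theory is exactly $T_{\Johnson}$.
\end{enumerate}
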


Once we have this complete description of the models of $T_{\Johnson}$ we prove the following result.
\begin{theorem}
	The theory $T_{\Johnson}$ is $ω$-stable of Morley rank $ω$.
\end{theorem}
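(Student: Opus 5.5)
The plan is to count types over countable sets in a convenient model, and then compute Morley rank by working inside a single connected component. By Theorem~\ref{AxiomResultsIntro} every model of $T_{\Johnson}$ is a \GenPhrase, so I take a monster model $\mathbb M=\bigsqcup_{i} J'(κ_i,λ_i)$ with many components of every relevant size.

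The first step is to show that the graph metric $d$ is definable on pairs at finite distance, and to read it off from the set representation: in $J'(κ,λ)$ we have $d(u,v)=|u\setminus v|$. Points in different components are at distance ``$\infty$'', which is expressed by the partial type $\{d(x,y)>n : n\in\omega\}$. Inside a component with central vertex $A$ and underlying set $X$, the automorphisms induced by $\operatorname{Sym}(X)$ that fix $A$ setwise act on the vertices.

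The second step is quantifier elimination, or at least a description of types. For a countable set $B$, I would show that $\operatorname{tp}(v/B)$ is determined by two pieces of data. The first is which component $v$ lies in, relative to the components meeting $B$. The second, when $v$ lies in a component meeting $B$, is the Boolean ``Venn data'' of $v$ relative to $B$. Concretely, for each finite tuple $b_1,\dots,b_m$ from $B$ in that component, this data records the finitely many finite cardinalities $|v\cap \bigcap_{j\in S} b_j\cap\bigcap_{j\notin S}(X\setminus b_j)|$ for those atoms that are finite, together with the finite deviations on the infinite atoms. Everything here is finite information, because $v$ differs from each $b_j$ by a finite set.

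To justify this description, I would use a back-and-forth between tuples with the same Venn data, realized by permutations of $X$. This is essentially the EF strategy underlying Theorem~\ref{EF-Intro}, upgraded to an automorphism argument in saturated jumbled Johnson graphs.

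With the description in hand, the type count goes as follows. Fix one component meeting $B$, and let $\mathcal{B}$ be the countable Boolean algebra of subsets of $X$ generated by $B$ in it. Since $v\,\triangle\,b$ is finite for every $b\in B$, $v$ is determined up to type by a finite ``correction'' set $F=v\setminus b_0$ and $G=b_0\setminus v$ for a fixed $b_0$. The type of $v$ then amounts to the finite data of how many elements of $F$ and $G$ lie in each atom of $\mathcal{B}$ determined by the realized points. There are only countably many such finitary data, modulo the behaviour on infinite atoms, which should be handled by the ``generic'' choice. This gives countably many types over $B$, hence $\omega$-stability.

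For the Morley rank I would compute by induction, stratifying by distance. I expect $\mathrm{RM}(d(x,a)\le n)=n$ for a fixed vertex $a$. The intended comparison is that the sphere of radius $n$ around $a$ behaves like pairs of $n$-subsets from the definable infinite sets $a$ and $X\setminus a$, interpretable up to finite equivalence in a pure set. These are almost strongly minimal pieces, so the sphere of radius $n$ should have rank $n$ (or $2n$; the exact value does not matter, as long as it is finite and unbounded). The component of $a$ is the type-definable union of these balls, so the formula $x=x$ restricted to the component has rank at least $\omega$. Conversely, every formula not of finite rank contains the generic type of ``distance $\infty$ from all parameters'', and there is exactly one such type over any set. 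This gives rank exactly $\omega$.

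The main obstacle is the second step: proving that the Venn data determines the type, uniformly enough to handle infinitely many parameters and the atoms of infinite size. The lower bound on rank, and the identification of a unique type of rank $\omega$ (the generic new component), should be routine once types are understood.
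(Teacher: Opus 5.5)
Your outline is correct. Its Morley-rank half is essentially the paper's argument. The paper also shows that balls have finite, unbounded rank (Lemma~\ref{MorleyBall}, via the strongly minimal row and column of $N(v)$ and the definable bijection $D_n\to\binom{C}{n}\times\binom{R}{n}$). It excludes rank $\omega+1$ exactly as you do: any formula not contained in finitely many balls around its parameters is satisfied by all sufficiently distant points, so no two such formulas are disjoint. You should make the compactness step explicit: a formula not in the generic type is contained in finitely many balls around its parameters, and therefore has finite rank.

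The genuine difference lies in how you obtain $\omega$-stability and the ``far points'' dichotomy. The paper never describes types over infinite sets. It gets the dichotomy from an Ehrenfeucht--Fra\"{\i}ss\'e argument (Lemma~\ref{LongDistanceLemma}). It then gets $\omega$-stability for free, since an ordinal-valued Morley rank in a countable language already means total transcendence. Your type count is therefore logically redundant for the statement, but it is sound and buys more: an explicit description of types over arbitrary sets, and Morley degree $1$.

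The step you flag as the main obstacle is in fact easy. Put $S=\bigcup_{b\in B}(b\triangle b_0)$. Each class of ``same membership in every $b\in B$'' inside $S$ lies in some finite $b\triangle b_0$. Suppose $v,v'$ have matching counts $|v\cap Q|=|v'\cap Q|$ on these finite classes; these counts differ from $|b_0\cap Q|$ for only finitely many $Q$. Suppose also they have equal finite deviations on $b_0\setminus S$ and $(X\setminus b_0)\setminus S$. Then there is a permutation of $X$ fixing every $b\in B$ setwise and sending $v$ to $v'$. It induces an automorphism of the component, and no saturation is needed.

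Saturation, or the EF argument, is needed only for uniqueness of the generic type. Swapping two components requires them to be isomorphic. That holds in a saturated model, where every component is $J'(\mu,\mu)$.

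Two small corrections. The sphere of radius $n$ has rank $2n$, not $n$; your hedge covers this, but the paper's computation settles it. The component of $a$ is a countable union of definable sets, not type-definable; it is its complement that is type-definable.
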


We complement this result by showing that $T_{\Johnson}$ is not monadically dependent (and hence not monadically stable). Recall that a theory $T$ is {\em monadically dependent} if any expansion of any model of $T$ by unary predicates is dependent. Similarly, a theory $T$ is {\em monadically stable} if any expansion of any model of $T$ by unary predicates is stable.

These notions also play a crucial role in model-checking for first-order logic on classes of graphs.  It is known that model checking is fixed-parameter tractable on every monadically stable graph class. Furthermore, monadic stability is precisely the dividing line between tractability and intractability of model-checking on hereditary classes that exclude a half-graph as a semi-induced subgraph, up to some reasonable complexity-theoretic assumptions~\cite{DreierEMMPT24}.
It is conjectured that a hereditary graph class $\mathcal C$ allows fixed-parameter tractable model-checking if and only if $\mathcal C$ is monadically dependent.
Under reasonable complexity-theoretic assumptions, the left-to-right direction has recently been proved for hereditary graph classes, with progress made in the right-to-left
direction~\cite{PrzybyszewskiTorunc-FlippingandForking}.
Our results indicate that Johnson graphs lie just outside the tractable world.

Section 2 introduces notation, definitions and presents a few lemmas we will rely on.
In Section 3 we use Ehrenfeucht-Fraïssé arguments to show that $T_{\Johnson}$ is a complete theory.
In Section 4 we will give axioms for $T_\Johnson$.
Section 5 contains the stability-theoretic characteristics of $T_{\Johnson}$.
We conclude in Section 6 with possible future research questions.

\section{Preliminaries}
In our previous paper \cite{AdlerBM22} we developed a technique of working with Johnson graphs by viewing the neighbourhoods of each vertex.
The neighbourhood of a single vertex belongs to a well known class of graphs called rook's graphs.
\begin{definition}
Let $m,n$ be cardinals.
The \emph{rook's graph} $R(m,n)$ is the cartesian product of $K_m$ and $K_n$.
We can represent every vertex by a pair from $R\times C$ where $|R|=m$ and $|C|=n$
and two distinct vertices $(i,j),(k,l)$ are adjacent if and only if $i=k$ or $j=l$.
\end{definition}

The name of these graphs comes from the intuition that if we take a $m$ by $n$ chessboard and place a vertex on each square then there is an edge between two squares if a rook can legally be moved from one to the other.
The following is straightforward to verify.
\begin{lemma}[\cite{AdlerBM22}]
	\label{nRook}
	In a Johnson graph $J(n,k)$ the open neighbourhood of any vertex $v$ induces $R(n-k,k)$ as a subgraph of $J(n,k)$.

\end{lemma}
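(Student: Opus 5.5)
Fix a vertex $v$ of $J(n,k)$, viewed as a $k$-subset of the underlying set $X$ with $|X|=n$. The plan is to describe the open neighbourhood $N(v)$ explicitly, write down a bijection between $N(v)$ and the vertex set of $R(n-k,k)$, and then check that this bijection preserves both adjacency and non-adjacency.

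First I will parametrise $N(v)$. A vertex $u$ is adjacent to $v$ exactly when $|u\setminus v|=|v\setminus u|=1$. So $u=(v\setminus\{a\})\cup\{b\}$ for a unique $a\in v$ and a unique $b\in X\setminus v$. Conversely, every such pair $(a,b)$ gives a $k$-subset adjacent to $v$. This yields a bijection $N(v)\to (X\setminus v)\times v$, $u\mapsto(b,a)$. Here $|X\setminus v|=n-k$ and $|v|=k$, so the target is the vertex set of $R(n-k,k)$ with rows indexed by $X\setminus v$ and columns by $v$.

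Next I will check adjacency. Take distinct neighbours $u=(v\setminus\{a\})\cup\{b\}$ and $u'=(v\setminus\{a'\})\cup\{b'\}$, so $(a,b)\neq(a',b')$. I split into cases.
\begin{itemize}
\item If $a=a'$ and $b\neq b'$, then $u\setminus u'=\{b\}$ and $u'\setminus u=\{b'\}$, so $u$ and $u'$ are adjacent.
\item If $b=b'$ and $a\neq a'$, then symmetrically $u\setminus u'=\{a'\}$ and $u'\setminus u=\{a\}$, so again they are adjacent.
\item If $a\neq a'$ and $b\neq b'$, then $u\setminus u'=\{a',b\}$ has size $2$, so $u$ and $u'$ are not adjacent.
\end{itemize}
Hence $u\sim u'$ exactly when the corresponding pairs agree in the row coordinate or in the column coordinate. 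This is precisely the adjacency of $R(n-k,k)$, so the bijection is a graph isomorphism between the induced subgraph on $N(v)$ and $R(n-k,k)$.

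There is no real obstacle here. The only step needing care is the last case, where one must confirm that the symmetric difference has size $4$ rather than $2$. The same computation works verbatim for infinite cardinals and for $J'(\kappa,\lambda)$, which is presumably how the lemma is used later.
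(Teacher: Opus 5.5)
Your proof is correct, and it is the same direct verification the paper relies on. The paper gives no proof of its own: it calls the lemma straightforward to verify and cites earlier work. Your convention of indexing rows by the added element of $X\setminus v$ and columns by the removed element of $v$ also matches the one used in the axiomatisation section.
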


\begin{lemma}
\label{EquivRelationLemma}
	Let $R$ be the relation on the powerset of some set $X$ such that $aRb$ if and only if $|a∖b|=|b∖a|∈\mathbb{N}$.
	Then $R$ is an equivalence relation.
\begin{proof}
For finite sets $R$ is equivalent to equinumerosity, and for $X$ infinite  this is a more restricted relation but we note that if we have $aRb$ and $bRc$ then $|a∖c|≤|a∖b|+|b∖c|$ and thus $|a∖c|∈ℕ$. For more details see Lemma 5.1.3 in \cite{Thesis}.
\end{proof}

\end{lemma}

\begin{lemma}
	Any vertex in  $J'(κ,λ)$ can act as the central vertex.
\begin{proof}
	Let $M=J'(κ,λ)$ with central vertex $A$ and underlying set $X$ and  $B$ be  any other vertex of $M$.
	Then let $M'$ be $J'(κ,λ)$ with central vertex $B$ and underlying set $X$.
	By Lemma \ref{EquivRelationLemma} we have that $M$ and $M'$ have the same vertex set namely the equivalence class of $R$  that includes  $A$ and $B$.
	The interpretation of the edge relation does not depend on the selection of central vertex in any way so it is identical in  $M$ and $M'$.
\end{proof}
\end{lemma}
In particular, the above lemma says that $J'(κ,λ)$ is vertex-transitive. In fact it is easily seen to be distance-transitive, though this is not used here.

Let $X$ be a set and $n$ an integer such that $n≤|X|$.
Then $\binom{X}{n}$ denotes set of all $n$-element subsets of $X$.
Let $v$ be a vertex in a graph $G$.
We call the set of all vertices adjacent to $v$  the \emph{neighbourhood of $v$}, denoted $N(v)$. 
Since we do not allow loops on vertices we have $v∉N(v)$, and
 what we call here neighbourhood is also sometimes called the \emph{open neighbourhood} around~$v$.
Let $v$ be a vertex in a graph $G$. We call the set of all vertices of distance at most $r$ from~$v$  the \emph{ball of radius $r$ around $v$}. Observe that $v$ is at distance $0$ from itself and thus included in all balls around~$v$.

We will now introduce Ehrenfeucht-Fraïssé games.
Ehrenfeucht-Fraïssé game arguments are used for determining if a concept is expressible in first order logic, or if two structures satisfy the same sentences.
The game is played between two players, usually named Spoiler and Duplicator.
\begin{definition}
	Let $L$ be a language with no function symbols, $n∈ℕ$, and  $M$ and $M'$ be $L$-structures.
	The \emph{Ehrenfeucht-Fraïssé game} $EF_n(M,M')$ is a $2$ player game played in the following way.

	In each round first Spoiler plays one element from either $M$ or $M'$, then Duplicator plays one element from whichever model Spoiler did not play in.
	We call the element played in $M$ on the $i$-th round $a_i$ and the element played in $M'$ we call $b_i$ regardless of which player plays them.
	After $n$ rounds have been played we have $(a_i)_{i=1}^n$ and $(b_i)_{i=1}^n$.

	Duplicator has won if for any atomic $L$-formula $ϕ$ and any sequence $(i_j)_{j=1}^k$ with $k≤n$ we have
	$M \models ϕ((a_{i_j})_{j=1}^k) \Leftrightarrow M'\models ϕ((b_{i_j})_{j=1}^k)$,
	otherwise Spoiler has won.

\end{definition}

Ehrenfeucht-Fraïssé games are a finite analogue to so-called back-and-forth arguments common in model theory.
Ehrenfeucht and Fraïssé showed the following.
	\begin{theorem}[Ehrenfeucht-Fraïssé {\cite{Ehrenfeucht1961}}, see also \cite{EF95}]
		\label{EF}
		If Duplicator has a winning strategy in $EF_n(M,M')$ then for all $L$-sentences $ϕ$ of quantifier depth at most $n$,
		$M\models ϕ \Leftrightarrow M'\models ϕ$.
	\end{theorem}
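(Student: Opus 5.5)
We prove a stronger statement about positions of the game, by induction on quantifier depth; the theorem is then the case of the empty starting position. A \emph{position} is a pair of tuples $\bar a=(a_1,\dots,a_m)$ from $M$ and $\bar b=(b_1,\dots,b_m)$ from $M'$. We say Duplicator has a winning strategy in the game with $r$ rounds remaining from $(\bar a,\bar b)$ if she can continue for $r$ more rounds so that the final tuples $\bar a\bar a'$ and $\bar b\bar b'$ satisfy the winning condition of the definition. The claim is that if she has such a strategy, then for every $L$-formula $\varphi(x_1,\dots,x_m)$ of quantifier depth at most $r$,
\[
M\models\varphi(\bar a)\iff M'\models\varphi(\bar b).
\]
Taking $m=0$ and $r=n$ gives the theorem.

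The induction is on the structure of $\varphi$, with $r$ fixed at each stage and the claim assumed for all smaller $r$.

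\textbf{Atomic formulas.} Since $L$ has no function symbols, an atomic formula is $R(x_{i_1},\dots,x_{i_k})$ or $x_i=x_j$, possibly involving constant symbols. If $r=0$, the winning condition for the completed game, which holds by hypothesis, says exactly that such formulas agree on corresponding subtuples. If $r>0$, Duplicator keeps winning whatever the continuation, and the winning condition at the end still covers subsequences of the first $m$ moves, so atomic agreement on $(\bar a,\bar b)$ already holds. Constants can be handled either by regarding their interpretations as pre-played elements, or by noting that the paper's languages here are relational; I will say this explicitly.

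\textbf{Boolean combinations.} Negation and conjunction do not raise quantifier depth, so the equivalence passes through them immediately.

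\textbf{Existential quantifier.} This is the only substantive step. Suppose $\varphi=\exists y\,\psi(\bar x,y)$ with $\psi$ of depth at most $r-1$, and suppose $M\models\psi(\bar a,c)$. Let Spoiler play $c$ in $M$. Duplicator's strategy gives a reply $d\in M'$ such that she still has a winning strategy with $r-1$ rounds remaining from $(\bar a c,\bar b d)$. By the induction hypothesis for $r-1$, $M'\models\psi(\bar b,d)$, and hence $M'\models\varphi(\bar b)$. The converse direction is symmetric: Spoiler plays in $M'$ instead. Universal quantifiers reduce to $\neg\exists\neg$.

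The only point needing care is bookkeeping. The induction must be over formulas of depth at most $r$ with free variables among the already-played positions, and it must use the fact that a winning strategy from a position restricts to winning strategies from every successor position. There is no genuine obstacle beyond this. Alternatively, the result can simply be cited from \cite{Ehrenfeucht1961,EF95}.
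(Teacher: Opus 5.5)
Your argument is correct. It is the standard induction on quantifier depth over game positions, which proves the direction ``Duplicator wins $\Rightarrow$ agreement on sentences of depth at most $n$'' of Ehrenfeucht's theorem; \cite{EF95} phrases the same argument via back-and-forth systems, and the paper gives no proof of its own, simply citing \cite{Ehrenfeucht1961,EF95}.
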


We note that if Duplicator has a winning strategy in $EF_n(M,M')$  for all $n∈ℕ$ then the models $M$ and $M'$ are indistinguishable from the perspective of first order logic.
This means that any characteristic that $M$ has and $M'$ does not have  cannot be expressed in first order logic.

%
%
%

\section{Complete infinite theory}
In this section we will explore the theory of infinite Johnson graphs, proving Theorem \ref{EF-Intro} and thereby obtaining Corollary \ref{CompleteTheory}.
We begin with a lemma.
\newcommand{\img}{\operatorname{img}}
\newcommand{\dom}{\operatorname{dom}}
\begin{lemma}
	\label{NoCover}
	The vertex set of $J(n,k)$  with $1<k≤\frac{n}{2}$ cannot be covered by $m-1$ balls of
	radius at most $t/(2^{m+1})$, for any $m\leq \lfloor \log t \rfloor$, where $t=k/2$.
	\begin{proof}
		The graph $J(n,k)$ has diameter $k$, and two vertices are distance $k$ apart whenever they are
		disjoint subsets of the underlying set.
		Take two vertices $u,v$ such that $d(u,v)=k$ and a shortest path $P$ from $u$ to $v$.
		Each ball of radius at most $\frac{t}{2^{m+1}}$ covers at most
		$\frac{t}{2^{m}}+1$ vertices of $P$. Hence $m-1$ such balls cover at most
		$(m-1) (\frac{t}{2^{m}}+1)$ vertices of $P$. Since $t=k/2$, we have that
			\[(m-1) (\frac{t}{2^{m}}+1)=(m-1)\big(\frac{k}{2^{m+1}}+1\big)
			= k\frac{m}{2^{m+1}}-\frac{k}{2^{m+1}}+m-1.\]

		Using  $\frac{m}{2^{m+1}}<\frac12$ and $-\frac{k}{2^{m+1}}\leq 0$ and
		$m\leq \lfloor \log (k/2) \rfloor$,
		we obtain
			\[k\frac{m}{2^{m+1}}-\frac{k}{2^{m+1}}+m-1 <\frac{k}{2}+\lfloor \log (k/2) \rfloor-1.\]
		Hence by the displayed lines we obtain
		\[(m-1) (\frac{t}{2^{m}}+1) <\frac{k}{2}+\lfloor \log (k/2) \rfloor-1
		\leq \frac{k}{2}+\log k - 1 \leq k.\]
		Hence at least one of the $k+1$ vertices of $P$ is not covered.
	\end{proof}
\end{lemma}
Lemma \ref{NoCover} opens up a winning strategy for  Duplicator in Ehrenfeucht-Fraïssé games between two Johnson graphs.
The strategy relies on keeping track of distances between played vertices, but ignoring those  pairs of vertices that are too far apart to feasibly interact.

\begin{theorem}
	\label{ThmEF}
	Duplicator has a winning strategy in  the Ehrenfeucht-Fraissé game
	$EF_{\floor{\log{t}}}(J(n,k),J(n',k'))$
	where
	$t=\floor{\frac{\min(k,n-k,k',n'-k')}{2}}$.
	\begin{proof}
		Since $J(n,k)\cong J(n,n-k)$ we can assume without loss  of generality $n≥2k$ and $n'≥2k'$.
		Moreover let us assume that $k'≤k$.
		This means that $t=\floor{\frac{k'}{2}}$.

		As a clarification on notation 	we will denote by $a_i$ the vertex played on turn $i$ in $J(n,k)$,   regardless of which player plays it.
		Likewise 			we will denote by $b_i$ the vertex played on turn $i$ in $J(n',k')$, regardless of which player plays it.

		Duplicator will be building up a partial bijection $f$ from $X$, the underlying set of $J(n,k)$, to $Y$ the underlying set of $J(n',k')$.
		We do this by creating a sequence $(f_i)_{i=1}^{\floor{\log{t}}}$ where $f_1$ is the empty map, $f_{\floor{\log{t}}}=f$ and  $f_{i+1}$ extends $f_{i}$.
		It is through this sequence of functions that duplicator is able to avoid any traps set by spoiler and is guaranteed a move on every turn of the game.

		\paragraph{Base case:}

		By vertex transitivity, the first move of the game is arbitrary both by spoiler and duplicator.
		Duplicator sets $f_1$ to be the empty map.


		\paragraph{Induction hypothesis:}
		After $m-1$ moves by both players which grant us the sequences $(a_i)_{i=1}^{m-1}, (b_i)_{i=1}^{m-1}$ in $J(n,k)$ and $J(n',k')$ respectively,  duplicator has formed the function
		$f_{m-1}$ in such a way that, for each $i,j\leq m-1$, if
		$|a_i\symdiff a_j|≤\frac{t}{2^{m-1}}$ or $|b_i\symdiff b_j|≤\frac{t}{2^{m-1}}$  then 
		$a_i\symdiff a_j⊆ \dom(f_{m-1})$ , $b_i \symdiff b_j ⊆ \img(f_{m-1})$ and
		$$x∈a_i∖ a_j \iff f_{m-1}(x)∈b_i∖ b_j$$
		Note that this implicitly means that for all $i,j<m$ if $d(a_i,a_j)≤ \frac{t}{2^m}$ or $d(b_i,b_j)≤\frac{t}{2^m}$ then  $d(a_i,a_j)=d(b_i,b_j)$.

		\paragraph{Inductive step:}
		We consider separately how to respond to spoiler playing $a_m$ or $b_m$.

		\paragraph{Assume spoiler plays $a_m$.}
		Let $M⊆[1,m-1]∩ℕ$ be such that $i∈M$ if and only if $d(a_i,a_m)≤\frac{t}{2^{m+1}}$.

		If $M=∅$ then duplicator sets $f_m=f_{m-1}$ and plays a $b_m$ such that $d(b_i,b_m)>\frac{t}{2^{m+1}}$ for all $i<m$.
		Such a vertex always exists due to Lemma \ref{NoCover}.

		Assume $M≠∅$.
		Duplicator has to extend $f_{m-1}$ to $f_m$ with domain
		$$\dom(f_{m-1})∪⋃_{i∈M} a_m\symdiff a_i$$  in such a way that if we have for some $i∈M$ that $x∈a_i∩ \dom(f_{m})$ then $f_{m}(x)∈b_i$ and every element in $( a_m∖(⋃_{i<m}a_i) ) ∩ \dom(f_m)$
			gets mapped to an element that is  in the  complement  of $⋃_{i∈M}b_i$.

		For every $i,j∈M$ we have that $d(a_i,a_j)≤ \frac{2t}{2^{m+1}}$ so by induction hypothesis $a_i\symdiff a_j⊆\dom(f_{m-1})$.
		So
		$$(⋃_{i∈M} a_m\symdiff a_i) ∖ \dom(f_{m-1})  ⊆  (a_m ∖ ⋃_{i∈M}a_i) ∪ ((⋂_{i∈M}a_i)∖a_m)$$
		that is to say, every element that duplicator is adding to  the domain of $f_m$ is either in only $a_m$  or in every other $a_i$ for $i∈M$.
		So the elements that will be in the domain of $f_m$ that are not in the domain of $f_{m-1}$ will be in either
		$(⋂_{i∈M} a_i) ∖ a_m$
		or
		$a_m∖⋃_{i∈M} a_i$
		and thus need to be mapped to
		$(⋂_{i∈M} b_i)$
		and
		$Y∖(⋃_{i∈M} b_i)$ respectively.
		In each case, to extend $f_{m-1}$ to $f_{m}$, we need to show that the domain contains at most as many elements as the image.

		\paragraph{Case 1:}
		Mapping
		$(⋂_{i∈M} a_i) ∖ a_m$
		to
		$(⋂_{i∈M} b_i)$
		is possible since:
		We know that for all $i∈M$ $|a_i∖a_m|≤ \frac{t}{2^{m+1}}$ so we get:
		$$|(⋂_{i∈M} a_i) ∖ a_m| ≤ \frac{t}{2^{m+1}} ≤  t$$
		and
		$$|(⋂_{i∈M} b_i)|≥ k'- \frac{|M|t}{2^{m}} ≥ k' - \frac{tm}{2^{m}} ≥ k' - t ≥ t.$$
		For the first inequality, observe that if $i,j\in M$, then $|b_i-b_j|=|a_i-a_j|\leq  \frac{t}{2^{m}}.$
		\paragraph{Case 2:}
		We can map
		$a_m∖⋃_{i∈M} a_i$
		to
		$Y∖(⋃_{i∈M} b_i)$ due to the following:
		Since for $i∈M$ $|a_m∖a_i|≤ \frac{t}{2^{m+1}}$ we get
		$$|a_m∖⋃_{i∈M} a_i| ≤ \frac{t}{2^{m+1}}≤ t$$ 
		and
		$$|Y∖(⋃_{i∈M} b_i)| ≥  n' - (k' + \frac{|M|t}{2^{m}}) ≥ (n' - k') - m\frac{t}{2^{m}} ≥ k' - \frac{mt}{2^{m}} ≥ k'-t ≥ t$$

		In either case there are sufficiently many unmapped elemnts for duplicator to extend $f_{m-1}$ to $f_m$.
		Duplicator then plays $b_m=f_m[a_m∩ \dom(f_m)] ∪ ⋂_{i∈M}b_i$.
		Note that the set~$b_m$ is a vertex, i.\,e.\ a set of size $k'$, by the definition of $f_m$.

		Also then we have for all $i∈M$ that  $d(a_m,a_i)=d(b_m,b_i)$.
		So by induction hypothesis  we have for $i,j≤m$ that if
		$|a_i\symdiff a_j|≤\frac{t}{2^{m}}$ or $|b_i\symdiff b_j|≤\frac{t}{2^{m}}$  then
		$a_i\symdiff a_j⊆ \dom(f_{m})$ , $b_i \symdiff b_j ⊆ \img(f_{m})$ and
		$$x∈a_i∖ a_j \iff f_{m}(x)∈b_i∖ b_j.$$

		\paragraph{Assume spoiler plays  $b_m$.}
			A similar argument to above gives duplicator a suitable $a_m$ to play.
		So after $m$ moves by both players, duplicator has formed the function $f_{m}$ in such a way that if
		$|a_i\symdiff a_j|≤\frac{t}{2^{m}}$  then  $x∈a_i ∖ a_j \iff f_m(x)∈b_i∖b_j$ and $x∈a_j∖a_i \iff f_{m}(x)∈b_j∖b_i$. 
		This concludes the inductive step.

		At the end of the game we have for any $i,j≤\floor{\log{t}}$
		that if
		$d(a_i,a_j)≤\floor{\frac{t}{2^{\log{t}}}}=1$ or
		$d(b_i,b_j)≤\floor{\frac{t}{2^{\log{t}}}}=1$
		then
		$d(a_i,a_j)=d(b_i,b_j)$.
		In particular this means that $Ea_ia_j$ if and only if $Eb_ib_j$ so the substructures
		$A=\{a_i\colon 1≤i≤\log(t)\}$ in $J(n,k)$
		and
		$B=\{b_i\colon 1≤i≤\log(t)\}$ in $J(n',k')$ are isomorphic with isomorphism $a_i\mapsto b_i$.
	\end{proof}
\end{theorem}

\begin{theorem}
\label{InfiniteEF}
Let $(κ_i)_{i∈I}$ and $(λ_i)_{i∈I}$ be families of infinite cardinals.
Duplicator has a winning strategy in  the Ehrenfeucht-Fraïssé game
$EF_{\floor{\log{t}}}(J(n,k),\bigsqcup_{i∈I} J'(κ_i,λ_i))$
where $t=\floor{\frac{\min(k,n-k)}{2}}$.
\begin{proof}

Duplicator's strategy in the proof of Theorem \ref{ThmEF} does not make any distinction between vertices further than $\frac{t}{2^m}$ from played vertices on every step of play.
So vertices in separate connected components are simply treated as being too far from each other to interact and Duplicator can apply the same strategy.
\end{proof}
\end{theorem}

In this section we have shown that the $J'(κ,λ)$ are a natural extension of Johnson graphs into infinite graphs.
Furthermore we have shown that {\GenPhrase}s live up to their name and generalize the notion of infinite Johnson graphs while remaining elementarily equivalent.
Theorem \ref{ThmEF} together with Theorem \ref{InfiniteEF} gives us Theorem \ref{EF-Intro}.
It follows that as $n$, $k$, and $n-k$ all grow, the truth value of any sentence in the language of graphs eventually stabilizes for $J(n,k)$ and therefore we have a limit theory $T_{\Johnson}$ for Johnson graphs, which is a complete theory, and simultaneously the theory for {\GenPhrase}s, thus proving Corollary \ref{CompleteTheory}
\section{Axiomatization}
\renewcommand*{\MyPath}{Axioms}
\subsection{Axiomatic theory of Johnson graphs}
\label{AxChap}
In this section we will give axioms for $T_{\Johnson}$.
We will first give the axioms that are satisfied by any Johnson graph.
We then give the axiom schemas that give information about parameters $n$ and $k$ for $J(n,k)$.
	We are primarily interested in the axiom schemas where $n$ and $k$ are left unbounded.
We will then show that a connected graph $M$ is isomorphic to $J'(κ,λ)$ for some $κ$ and $λ$ if and only if it satisfies those axioms.
Before we begin we introduce the shorthands we use in the definitions below.
\begin{definition}
Let $L$ be the language of graphs i.e. a language with a single binary relation symbol $E$.\\
Let $L_c$ be the language of graphs with $3$ additional constant symbols $v,a,$ and $b$.
\end{definition}

We will first give our axioms in $L_c$.
This is done for convenience, and allows us to give the axioms in an intuitive manner that describes what Johnson graphs look like locally.
We will later on argue that the axioms apply for any $3$ pairwise adjacent vertices so we interpret the constant symbols of $L_c$ as variables in $L$ and obtain axioms for the theory of Johnson graphs.

\subsubsection{Definable language extensions}

%
%
%
%
		We use $Set_n((x_i)_{i=1}^n)$ to mean $(x_i)_{i=1}^n$ is a set of size $n$, i.\,e.\ they are $n$ distinct elements.
		We use
		$\{x_i\}_{i=1}^n =\{y_i\}_{i=1}^n$
	to mean that
		$(x_i)_{i=1}^n$ and $(y_i)_{i=1}^n$ are sets, i.\,e.~they each have $n$ distinct elements, and they represent the same set, i.\,e.\ they are equal up to permutation of indices.
%
%
%

	We write $d_n(x,y)$ to mean "$x$ is at distance $n$ from $y$"
	and for the special vertex $v$ we let
	$D_n:=\{u:d_n(v,u)\}$.
In addition to these we show that the underlying set of a Johnson graph can be expressed and argued with in our language.
In order to do that we first need to describe the local structure of our graph.
	\newcommand{\Rep}{\operatorname{Rep}}
	\newcommand{\RRep}{\operatorname{RRep}}
	\newcommand{\CRep}{\operatorname{CRep}}
	\newcommand{\Col}{\operatorname{Col}}
	\newcommand{\Row}{\operatorname{Row}}
	\paragraph{Rook's graph structure}
	Lemma \ref{nRook} gives us that $N(v)$ induces a rook's graph as a subgraph.
	We work with the intuitive notions of row and column in the rook's graph.
	We use the constants $a$ and $b$ to identify the difference  between two adjacent vertices in $N(v)$ sharing a row on one hand and sharing a column on the other.
	We say $a$ and $b$ are to be in the same column and  the members of that column are representatives of their respective rows.
	The row that $a$ belongs to is used as the set of representatives for columns.
	$$\Rep(x):= Evx ∧ (Exa ∨ x=a)$$
	$$\RRep(x):= Rep(x) ∧ (Exb ∨ x=b)$$
	$$\CRep(x):= Rep(x) ∧ (¬\RRep(x) ∨ x=a)$$

	We then say that two vertices belong to the same row (column) if they are adjacent to the same  row (column) representative.

	$$\Col(x,y) := (Evx ∧ Evy) ∧ (Exy ∨ x=y) ∧ ∃z (\CRep(z) ∧ (Exz ∨ x=z) ∧ (Eyz ∨ y=z))$$
	$$\Row(x,y) := (Evx ∧ Evy) ∧ (Exy ∨ x=y) ∧ ∃z (\RRep(z) ∧ (Exz ∨ x=z) ∧ (Eyz ∨ y=z))$$

	\paragraph{Underlying sets}
	We will now give formulae in  $L_c$  that capture the underlying set structure of the Johnson graph.
	Recall that the vertices correspond to subsets of the underlying set of the Johnson graph.
	We then use the fact that in the rook's graph on the neighbours of the vertex $v$ in a Johnson graph,
	each column contains those vertices that are missing the same element of $v$
	and each row those that include the same element from outside $v$.
	We let
	\begin{align*}
		ϕ_1(x,u,w)&:= \CRep(u)∧\RRep(w) ∧\\
			& ((¬\Rep(x)∧Exu ∧ Exw ∧ Exv) ∨ x=u=w=a ∨\\
			& (\CRep(x)∧x=u∧Exw)∨ (\RRep(x)∧Exu∧x=w)).\\
	\end{align*}
	Thus $ϕ_1(x,u,w)$ holds if and only if $x$ is a neighbour of $v$ in the column with $u$ for a representative and the row with  $w$ for a representative. For the vertices at distance $n$ from $v$ we define
	\begin{align*}
		ϕ_n(x,(u_i)_{i=1}^n,(w_i)_{i=1}^n)&:=d_n(v,x) ∧ Set_n((u_i)_{i=1}^n) ∧ Set_n((w_i)_{i=1}^n) ∧\\ &\bigwedge_{i=n}^n (\CRep(u_i) ∧ \RRep(w_i)) ∧\\
						& ∀y (d_{n-1}(v,y) \to (Exy \leftrightarrow
							( \bigvee_{i=1}^n \bigvee_{j=1}^nϕ_{n-1}(y,(u_{i'})_{\substack{i'=1\\i'≠i}}^n,(w_{j'})_{\substack{j'=1\\j'≠j}}^n)) )).
 	\end{align*}
		We want $ϕ_n$ to be a formula determining a bijective function $D_n \to \binom{\CRep}{n} \times \binom{\RRep}{n}$.
		We call that function $f_n \times g_n$ where $f_n(x):= \{u_i:1≤i≤n\}$ and $g_n(x):=\{w_i:1≤i≤n\}$.
		Intuitively, vertex $x$ represents the subset $(v∖f_n(x))∪(g_n(x)∖v)$ of the underlying set,
		i.\,e.\ for all vertices $x$ we have
		$$ ϕ_n((x,f_n(x),g_n(x)).$$

\subsubsection{Common Axioms}
	We start with the basic axioms of simple undirected graphs:
	\begin{align}
		\label{GraphAx1}
		&∀x¬Exx			\\
		&∀x∀y (Exy \to Eyx)
		\label{GraphAx2}
	\end{align}
	We also need an axiom stating that our choice of constants $v,a,$ and $b$ form a clique.
	\begin{align}
		\label{ConstAx}
		Eva∧Evb∧Eab
	\end{align}
	Next we will give the additional axioms that give us the rook's graph structure of $N(v)$.
	First, every vertex in $N(v)$ that is not a representative is adjacent to a unique row representative, and a unique column representative.
		\begin{align}
		\label{CRepUniqueAx}
			&∀x \big(d_1(v,x)∧¬\Rep(x) \to (\exists! y (\CRep(y)∧Exy)) \big)\\
		\label{RRepUniqueAx}
		&∀x \big(d_1(v,x)∧¬\Rep(x) \to (\exists! y (\RRep(y)∧Exy))\big)
		\end{align}
Here $\exists !y$ is read as `there is a unique $y$'.

	The relation $\Row$ is an equivalence relation on the set of neighbours of $v$:
	\begin{align}
		\label{RowIsEquiv}
		&∀x (d_1(v,x) \to  \Row(x,x))			\\
		&∀x∀y (\Row(x,y) \to \Row(y,x))			\\
		&∀x∀y∀z ((\Row(x,y)∧\Row(y,z)) \to \Row(x,z))
	\end{align}
	The relation $\Col$ is an equivalence relation on the set of neighbours of $v$:
	\begin{align}
		&∀x (d_1(v,x) \to \Col(x,x))			\\
		&∀x∀y (\Col(x,y) \to \Col(y,x))		\\
		&∀x∀y∀z ((\Col(x,y)∧\Col(y,z)) \to \Col(x,z))
		\label{ColIsEquiv}
	\end{align}
	Adjacent vertices share a row or a column:
		\begin{align}
			\label{ERowCol}
			∀x∀y  \big[(d_1(v,x) ∧ d_1(v,y)) \to \big((Exy ∨ x=y) \leftrightarrow (\Col(x,y) ∨ \Row(x,y)) \big)\big]
		\end{align}
	Two distinct vertices can share a row or column but never both:
		\begin{align}
			\label{RowOrCol}
			∀x∀y ((\Col(x,y) ∧ \Row(x,y)) \to x=y)
		\end{align}
	Every row intersects with every column in a singleton.
	This guarantees that all rows are equinumerous and all columns are equinumerous and they interact in the desired way.
		\begin{multline}
			\label{OppCorner}
			∀x∀y \big((d_1(v,x) ∧ d_1(v,y) ∧ ¬Exy ∧ x≠y) \to (∃!u(\Row(x,u)∧\Col(y,u)))\big)
		\end{multline}

	The above axioms determine that the neighbourhood of $v$ is a rook's graph.
	We now want to give axiom schemas to characterize vertices further away from $v$.
	For this purpose we will be relying on the functions $f_n$ and $g_n$ as defined by $ϕ_n$.

		\begin{align}
		\label{BiAx}
		\text{The function } f_n\times g_n:=  D_n \to \binom{\CRep}{n} \times \binom{\RRep}{n}\text{ is bijective.}
		\end{align}

		In $L_c$ we can express this as:
		\begin{align*}
			∀x \big(&
				d_n(v,x)
				\to (
					∃(u_i)_{i=1}^n ∃(w_i)_{i=1}^n (
					\bigwedge_{i=1}^n (\CRep(u_i)∧\RRep(w_i)) ∧
					Set_n((u_i)_{i=1}^n) ∧ Set_n((w_i)_{i=1}^n)\\&
∧
						ϕ_n(x,(u_i)_{i=1}^n,(w_i)_{i=1}^n)
						∧
						∀(u'_i)_{i=1}^n ∀(w'_i)_{i=1}^n (
							ϕ_n(x,(u'_i)_{i=1}^n,(w'_i)_{i=1}^n) \to \\& (\{u_i\}_{i=1}^n=\{u'_i\}_{i=1}^n ∧ \{(w_i)_{i=1}^n\}=\{(w'_i)_{i=1}^n\})
						)
					)
				)
			\big) ∧&\\
			∀(u_i) ∀(w_i) \big(&
				(
					\bigwedge_{i=1}^n (\CRep(u_i)∧\RRep(w_i)) ∧
					Set_n((u_i)_{i=1}^n) ∧ Set_n((w_i)_{i=1}^n)
				)
				\to (
					∃x ( d_n(v,x)  \\&
						ϕ_n(x,(u_i)_{i=1}^n,(w_i)_{i=1}^n)
						∧
						∀y (
							ϕ_n(y,(u_i)_{i=1}^n,(w_i)_{i=1}^n) \to (x=y)
						)
					)
				)
			\big)
		\end{align*}

%
%
%
		Let $h_n(x):=f_n(x)∪g_n(x)$.
		When $x$ and $y$ are both at  distance $n$ from $v$ then $Exy$ if and only if
			$|h_n(x) \symdiff h_n(y)|=2$.
%
		In $L_c$ we can express this by
		\begin{align*}
			∀x∀y&\big(( d_n(v,x) ∧ d_n(v,y))  \to
				(
				 Exy  \leftrightarrow
					(
						 ∃u ∃u' ∃(u_i)_{i=2}^n ∃w ∃w' ∃(w_i)_{i=2}^n \\
						&(
						 (w=w'\leftrightarrow u≠u')∧\\
						&(
							ϕ_n(x,u ,(u_i)_{i=2}^n,w ,(w_i)_{i=2}^n)
							∧
							ϕ_n(y,u',(u_i)_{i=2}^n,w',(w_i)_{i=2}^n)
						)
						)
					)
				)
			\big),
		\end{align*}
		but for simplicity we will write it as
		\begin{align}
			\label{Enn}
			∀x∀y(( d_n(v,x) ∧ d_n(v,y))  \to (Exy  \leftrightarrow|h_n(x)\symdiff h_n(y)|=2)).
		\end{align}

		The above condition fully specifies the vertices at distance $n$, hence  presenting them for each $n$ inductively will give us a full axiomatization of the radius $n$ balls in Johnson graphs.

\subsubsection{Size axioms}
Here we give the axiom schemas required to specify  the parameters $m$ and $k$ for the Johnson graph $J(m,k)$.
As we are primarily interested in the case where $m$,$k$ and $m-k$ are all unbounded we write these schemas in such a way that each axiom only specifies a lower bound for $m$ and $k$.
We do this by giving a lower bound on the number of vertices that satisfy $\RRep$ and $\CRep$.

For $J(m,k)$ we say:

\begin{align}
	\label{RSize}		σ_{\ell}^{\Row}:=(∃x_i)&_{i=1}^{\ell}\big(⋀_{1\leq i<j\leq\ell}  x_i≠x_j ∧  ⋀_{i=1}^{\ell} \RRep(x_i)\big), \text{ for }\ell=m-k  \\
\label{CSize}		σ_{  k}^{\Col}:=(∃x_i)&_{i=1}^{  k} \big(  ⋀_{1\leq i<j\leq k}  x_i≠x_j ∧
⋀_{i=1}^{  k} \CRep(x_i)\big)\end{align}
\TODO[I strongly think the previous version was better]
These axioms together with  (\ref{BiAx}), give us the number of vertices in the ball of radius $n$ around $v$.
Note that for $\binom{m}{k}<3$ the Johnson graphs $J(m,k)$ do not satisfy the above axioms as our assumption that the constant symbols $v,a$ and $b$ can be interpreted in our model is false in those cases.
This only applies to a couple of very small models whose theory can easily be described by specifying the interpretation of $E$.

We claim that the $L_c$-theory of $J'(κ,λ)$  is axiomatized by the above axioms leaving $m$, $k$ and $m-k$ unbounded.

\begin{definition}
	Let $Σ_c$ be the set of $L_c$-axioms (\ref{GraphAx1}) through (\ref{CSize}), noting that \ref{BiAx} through (\ref{CSize})
	are axiom schemes, where $σ_{\ell}^{Row}$ and $σ_k^{Col}$ hold for all $\ell$ and $k$, respectively.
\end{definition}

We write $J'(κ,λ)_c$
for $J'(κ,λ)$ expanded by constants $v,a,b$ interpreted by pairwise adjacent vertices.

\begin{theorem}
\label{AxThm}
Let $κ$ and $λ$ be infinite cardinals and $M$ be a connected graph viewed as an $L_c$-structure for some interpretation of $v,a,b$.
Then  $M\models Σ_c$ if and only if $M\cong J'(κ,λ)_c$.
\end{theorem}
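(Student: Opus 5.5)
The plan is to prove the two directions separately. In both, the map $x\mapsto(f_n(x),g_n(x))$ defined by $ϕ_n$ does the work: it identifies the layer $D_n$ with pairs of $n$-subsets of the column and row representatives.

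For the direction $J'(κ,λ)_c\models Σ_c$, let $X$ be the underlying set and identify the constant $v$ with its subset of $X$. By Lemma~\ref{nRook}, adapted to infinite parameters, $N(v)$ is a rook's graph whose lines come in two families: for $p\in v$, the sets missing $p$; for $q\in X∖v$, the sets containing $q$. The vertices $a$ and $b$ are adjacent neighbours of $v$, so they lie on a common line.
\begin{itemize}
\item If that line is of the first kind, then $\CRep$ picks out one vertex for each $q\in X∖v$, and $\RRep$ picks out the vertices $(v∖\{p\})∪\{q_0\}$, one for each $p\in v$.
\item In the other case the roles are exchanged. This case reduces to the first by complementation $u\mapsto X∖u$, which is a graph isomorphism onto $J'$ with the sizes of vertices and complements swapped.
\end{itemize}
Axioms (\ref{GraphAx1})--(\ref{OppCorner}) are then routine rook's graph facts. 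For the layers, I will show by induction on $n$ that $D_n$ consists exactly of the sets $(v∖B)∪C$ with $|B|=|C|=n$, and that $ϕ_n(x,\bar u,\bar w)$ holds precisely when $\bar u,\bar w$ code $B$ and $C$. The inductive step rests on one observation: the neighbours of such a set at distance $n-1$ from $v$ are obtained by putting back one element of $B$ and removing one element of $C$. This gives (\ref{BiAx}) and (\ref{Enn}). The size schemes hold because $κ$ and $λ$ are infinite.

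For the converse, assume $M\models Σ_c$ and $M$ is connected. Let $X$ be the disjoint union of the set of $\RRep$-vertices and the set of $\CRep$-vertices. Define $F\colon M\to\mathcal P(X)$ by $F(x)=(\RRep∖f_n(x))∪g_n(x)$ for $x\in D_n$, where $\RRep$ plays the role of the elements of $v$ and $\CRep$ the elements outside $v$ (or the reverse, according to the convention fixed above).
\begin{itemize}
\item By (\ref{BiAx}), $f_n$ and $g_n$ are well defined on $D_n$.
\item Connectedness gives $M=\bigcup_n D_n$, so $F$ is defined everywhere.
\item $F$ is injective: on each $D_n$ by bijectivity, and across layers because $|F(x)∖F(v)|=n$ recovers $n$.
\item The image of $F$ is the $R$-class of $F(v)$ from Lemma~\ref{EquivRelationLemma}, which is the vertex set of $J'(κ,λ)$ with $κ$ and $λ$ the two representative cardinalities. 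Both are infinite by (\ref{RSize}) and (\ref{CSize}), and after possibly complementing we may assume $λ\le κ$.
\end{itemize}

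It remains to check that $F$ preserves edges in both directions, by cases on the layers of $x$ and $y$.
\begin{itemize}
\item If both lie in $D_n$, use (\ref{Enn}).
\item If $x\in D_n$ and $y\in D_{n-1}$, use the universal clause of $ϕ_n$. It says $x$ is adjacent to $y$ exactly when $(f_{n-1}(y),g_{n-1}(y))$ is obtained from $(f_n(x),g_n(x))$ by deleting one element from each coordinate. That is exactly $|F(x)\symdiff F(y)|=2$.
\item If the layers differ by at least $2$, then $x$ and $y$ are non-adjacent in $M$ by the definition of distance, and non-adjacent in $J'$ because their symmetric difference has size at least $4$.
\end{itemize}
Finally, $F$ sends $v,a,b$ to pairwise adjacent vertices, so $M\cong J'(κ,λ)_c$.

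The main obstacle is the bookkeeping in the inductive description of $ϕ_n$. In the forward direction one must show that $ϕ_n$ defines exactly the intended coding and not some other bijection. In the converse, one must show that (\ref{BiAx}) together with the recursive clause forces the edges between consecutive layers to be exactly the set-theoretic ones. I would handle both with a single induction on $n$, proving simultaneously the characterisation of $D_n$ and of $ϕ_n$, with the base case $n=1$ being $ϕ_1$ and the rook's graph axioms.
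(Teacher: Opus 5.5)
Your strategy is the paper's. In the forward direction you compute $\operatorname{Rep}$, $\operatorname{RRep}$, $\operatorname{CRep}$ in $J'(\kappa,\lambda)$ and identify $f_n\times g_n$ with $(v\setminus B)\cup C\mapsto(B,C)$. In the converse you use (\ref{BiAx}) to map $M$ bijectively onto the $R$-class of a set, and check edges with (\ref{Enn}) and the recursive clause of $\phi_n$. Where you deviate, you tighten the paper's argument:
\begin{itemize}
\item The paper fixes $a,b$ missing a common element of $v$, and reaches other triangles only in Theorem~\ref{LAxThm} via $J'(\kappa,\kappa)$ and an automorphism. Your complementation covers both kinds of triangle at once.
\item You take $\operatorname{CRep}\sqcup\operatorname{RRep}$ itself as the underlying set, rather than $D_1/\operatorname{Col}\cup D_1/\operatorname{Row}$. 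This makes your converse independent of the $\operatorname{Row}/\operatorname{Col}$ axioms.
\item You check edges in both directions and across non-consecutive layers, and you verify by induction that $\phi_n$ defines the intended coding. The paper only asserts or omits these.
\end{itemize}

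One slip needs fixing: in your first case the two families of representatives are swapped. If $a=(v\setminus\{p_0\})\cup\{q_0\}$ and $b=(v\setminus\{p_0\})\cup\{q_1\}$, then $\operatorname{RRep}$ is the line through $a$ and $b$, namely $\{(v\setminus\{p_0\})\cup\{q\}:q\in X\setminus v\}$. Meanwhile $\operatorname{CRep}=\{(v\setminus\{p\})\cup\{q_0\}:p\in v\}$, as in the paper's computation. Since $\phi_n$ demands $\operatorname{CRep}(u_i)$ and $\operatorname{RRep}(w_i)$, you have $f_n(x)\subseteq\operatorname{CRep}$ and $g_n(x)\subseteq\operatorname{RRep}$. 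So the map must be $F(x)=(\operatorname{CRep}\setminus f_n(x))\cup g_n(x)$. In your disjoint union, the formula you wrote, $(\operatorname{RRep}\setminus f_n(x))\cup g_n(x)$, is the whole $\operatorname{RRep}$ summand for every $x$. After this correction the argument goes through.

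A caveat that applies equally to the paper's proof: calling (\ref{GraphAx1})--(\ref{OppCorner}) ``routine rook's graph facts'' is right only for the intended meanings of the formulas.
\begin{itemize}
\item Taken literally, $a$ satisfies both $\operatorname{RRep}$ and $\operatorname{CRep}$. So $z=a$ witnesses both $\operatorname{Row}(a,b)$ and $\operatorname{Col}(a,b)$, violating (\ref{RowOrCol}).
\item Likewise, the third disjunct of $\phi_1$ gives $\phi_1(a,a,b)$ alongside $\phi_1(a,a,a)$, violating (\ref{BiAx}) for $n=1$.
\end{itemize}
Both happen in every model of (\ref{GraphAx1})--(\ref{ConstAx}), so $\Sigma_c$ as written has no models. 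This is a defect of the formalisation, not of your strategy. It can be repaired, e.g.\ by requiring $x\neq a$ in the last two disjuncts of $\phi_1$ and forbidding $\operatorname{Row}$ between two distinct row representatives (dually for $\operatorname{Col}$). Your converse uses only (\ref{BiAx}), the clauses of $\phi_n$, (\ref{Enn}) and the size schemes. It is therefore unaffected by how $\operatorname{Row}$ and $\operatorname{Col}$ are repaired.
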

\begin{proof}
First we show that $J'(κ,λ)_c\models Σ_c$.
	We then show that a connected model that satisfies the above axioms fits our definition of $J'(κ,λ)_c$.


Assume we have  $J'(κ,λ)$ with underlying set $X$ and central vertex $A$.
First we choose vertices for our constant symbols $v,a,b$.
We let $v=A$ and $a,b$ be any two neighbours of $A$  such that
$a=(A∖\{x\})∪\{y\}$ and  $b=(A∖\{x\})∪\{z\}$ where $x∈A$ and $y,z∈X∖A$ with $y≠z$.
We quickly note that this interpretation of $v,a,b$ guarantees that axiom (\ref{ConstAx}) holds.

Second we want to show  how the formulae defining
$\Rep,\CRep,\RRep,\Row,\Col$ and $d_n$ will be interpreted.
We have for any $n∈ℕ$ and vertices $v_1,v_2$ that $d_n(v_1,v_2)$ holds if $v_1$ and $v_2$ are at distance $n$ from each other and $D_n$ is the set of  those vertices that are at distance $n$ from $v$.
Note that since $J'(κ,λ)$ is a connected graph we have for any vertex $u$ that there exists an $i∈ℕ$ such that $d_i(v,u)$ holds.

We now observe that the sets of vertices satisfying
$\Rep,\RRep,\CRep$ are
\begin{align*}
&\{(A∖\{i\})∪\{j\}:i=x ∨ j=y\}\\
&\{(A∖\{x\})∪\{j\}:j∈X∖A\} \\
&\{(A∖\{i\})∪\{y\}:i∈A\}
\end{align*}
respectively.
Note that each of these sets has size at least $ω$ so (\ref{RSize}) and (\ref{CSize}) hold.
Furthermore, every neighbour of $v$ that is not in $\Rep$ has a unique neighbour in $\CRep$, and a unique neighbour in $\RRep$, so  (\ref{CRepUniqueAx}) and  (\ref{RRepUniqueAx}) hold.

This gives us that
$\Row((A∖\{i\})∪\{l\},(A∖\{j\})∪\{k\}) \iff l=k$
and
$\Col((A∖\{i\})∪\{l\},(A∖\{j\})∪\{k\}) \iff i=j$.
Note that this means that $\Row(v_1,v_2)$ and $\Col(v_1,v_2)$ align with our intuition of $v_1$ and $v_2$ sharing a row and column in the rook's graph respectively.

We can now go through the remaining axioms in $Σ_c$ and show that $J'(κ,λ)_c$ satisfies each of them with the above interpretations.
$J'(κ,λ)_c$ is a graph so it satisfies (\ref{GraphAx1}) and (\ref{GraphAx2}).
	By the interpretation of $c,a$ and $b$, Axiom~\ref{ConstAx} holds.
Since $\Row$ and $\Col$ are equivalence relations on the rook's graph induced by the neigbourhood of $v$, $J'(κ,λ)_c$ satisfies (\ref{RowIsEquiv}) through (\ref{ColIsEquiv}).

Likewise since the rook's graph has all edges within any given row and all edges within any given column  and no other edges (\ref{ERowCol}) holds in $J'(κ,λ)_c$.

Since a row and a column always meet in exactly one vertex we have $J'(κ,λ)_c$ satisfies (\ref{RowOrCol}) and (\ref{OppCorner}).

If we take any two such vertices
$(A∖\{i\})∪\{l\}$ and $(A∖\{j\})∪\{m\}$ with $i≠j$ and $l≠m$ we have that they have exactly two neighbours in $D_1$ in common,
namely
$(A∖\{i\})∪\{m\}$ and $(A∖\{j\})∪\{l\}$.
So $J'(κ,λ)_c$ satisfies (\ref{OppCorner}).

Finally we check that the axiom schemas hold for $J'(κ,λ)_c$.

We write any vertex of $J'(κ,λ)_c$ at distance $n$ from $v$ in the form $(A∖B)∪C$ where
$B⊆A$, $C⊆X∖A$ and $|B|=n=|C|$.
Recall that $f_n$ and $g_n$ are components of a  bijective function  $D_n \to \binom{\CRep}{n}\times \binom{\RRep}{n}$.
We want to use $a=(A∖\{x\})∪\{y\}$ to fix the orientation and say that
$$f_n((A∖B)∪C)=		\{(A∖\{i\})∪\{y\}:i∈B\}
$$ and
$$g_n((A∖B)∪C)= \{(A∖\{x\})∪\{j\}:j∈C\}
$$

We note that for any $n$,  $f_n\times g_n$ is a bijection $D_n \to \binom{\CRep}{n} \times \binom{\RRep}{n}$ so the schema  (\ref{BiAx}) holds.
We also note that if we have $2$  vertices
$(A∖B)∪C$ and $(A∖D)∪F$ adjacent to each other then
$|((A∖B)∪C)\symdiff ((A∖D)∪F)|= 2$ which implies
	$|(B∪C)\symdiff (D∪F)|=2$.
If both are at distance $n$ from $v$ then $|B|=|C|=|D|=|F|=n$
 which gives us 
	$|h_n((A∖B)∪C)\symdiff h_n((A∖D)∪F)|=2$ so the schema (\ref{Enn}) holds.


We have now shown how each of the axioms in $Σ_c$  is satisfied and conclude that $J'(κ,λ)_c\models Σ_c$.

Conversely, assume $M \models Σ_c$ connected.
We want to show that $M$ is isomorphic to $J'(κ,λ)_c$ for some $κ$ and $λ$.
To do this we will first identify the underlying set  $X$  of $J'(κ,λ)_c$ with a certain set interpretable in $M$.
We will then  construct an isomorphism $h$ from $M$ to $J'(κ,λ)_c$ as $L_c$-structures.

Since $M$ satisfies the axioms of graph theory $M$ is a graph with $E$ as the edge relation.
We check the cardinality of the sets of representatives, that is of both
${\{x∈M\colon \RRep(x)\}}$ and
${\{x∈M\colon \CRep(x)\}}$.
By (\ref{RSize}) and (\ref{CSize}) we know that
$|\{x∈M\colon \RRep(x)\}|≥ω$ and
$|\{x∈M\colon \CRep(x)\}|≥ω$.
If
$|\{x∈M:\RRep(x)\}| ≤ |\{x∈M:\CRep(x)\}|$
we say
$|\{x∈M:\RRep(x)\}|=λ$ and $|\{x∈M:\CRep(x)\}|=κ$,  otherwise we say
$|\{x∈M:\RRep(x)\}|=κ$ and $|\{x∈M:\CRep(x)\}|=λ$.
We assume
$|\{x∈M:\CRep(x)\}| ≤ |\{x∈M:\RRep(x)\}|$.
This assumption can not a priori be taken without loss of generality,
but the argument when
$|\{x∈M:\CRep(x)\}| > |\{x∈M:\RRep(x)\}|$ is identical with the roles of the two sets reversed.
This is justified a posteriori by the observation that it is analogous to swapping $J(n,k)$ and $J(n,n-k)$.
In fact the graph $J'(κ,λ)$ with underlying set $X$ is isomorphic to the graph obtained by replacing each vertex $A$ with $X∖A$ and having two vertices adjacent if and only if their symmetric difference has size $2$.

We have $D_1:=D_1(v)$, and by axioms (\ref{RowIsEquiv}) through (\ref{ColIsEquiv}) we know that $\Col$ and $\Row$ are equivalence relations on $D_1$.
We can therefore take the quotients
 of those relations (i.e. the sets of their respective equivalence classes) and define $A:=D_1/\Col$  and $B:=D_1/\Row$.

Our definition of the relations $\Row$ and $\Col$ and the axioms stating that they are reflexive on the neighbours of $v$ gives us that every neighbour $u$ of $v$ has both a row- and column-representative denoted $r_u$ and $c_u$ respectively.
We then define $X:=A∪B$ and construct an isomorphism $h:M\to J'(κ,λ)_c$ as follows.
\begin{align*}
		&h(v):=A⊂X\\
∀u∈D_1		&h(u):=(A∪\{r_u/\Row\})∖\{c_u/\Col\}\\
\end{align*}


Given $x∈D_n$, define	$f_n^*(x):=\{u/\Col:u∈ f_n(x)\}$ and
			$g_n^*(x):=\{u/\Row:u∈ g_n(x)\}$.
 			$h(x):=(A ∪ g_n^*(x))∖ f_n^*(x)$.
We have now defined a full function  $h$ from  $M$ to $J'(κ,λ)_c$.
We note that $h$ is a bijection by our choice of $A$, $B$ and (\ref{BiAx}).
We now just need to check that it preserves the edge relation to prove that it is an isomorphism.

Consider two adjacent vertices $u$ and $w$ at distance $n$ from $v$.
Then by Axiom (\ref{Enn}) we have that
$|(f_n(u)\symdiff f_n(w)) ∪ (g_n(u)\symdiff g_n(w))|=2$.
This gives us that:
$$|(A∖f_n^*(u))∪(g_n^*(u)∖A) \symdiff (A∖f_n^*(w))∪(g_n^*(w)∖A)|=2$$
So the sets corresponding to $u$ and $w$ are adjacent in $J'(κ,λ)_c$.

Also consider two adjacent vertices $u$ and $w$ at distance $n$ and $n-1$ respectively  from $v$.
Then by Axiom (\ref{Enn}) $n$ we have that $f_{n-1}(w)⊂ f_n(u)$.
Hence,
$$|((A∖f_n^*(u))∪g_n^*(u)) \symdiff ((A∖f_{n-1}^*(w))∪g_{n-1}^*(w))|=2$$
Again, the sets corresponding to $u$ and $w$ are adjacent in $J'(κ,λ)_c$.

So $h$ is an isomorphism  $M \to  J'(κ,λ)_c$.

\end{proof}

So we know $Σ_c$ describes Johnson graphs locally around a vertex $v$.
We now view  $Σ_c$ as a set of formulae $ϕ(v,a,b)$ in $L$, the language of graphs, by taking  $v,a,$ and $b$  to be variables. 
Then if we bind each variable with a quantifier we obtain a  set of sentences in $L$ rather than $L_c$.
\begin{definition}
$Σ:= \{∀v∀a∀b ((Eva∧Evb∧Eab) \to ϕ(v,a,b)): ϕ∈Σ_c \} ∪ \{∀x∃y∃z (Exy ∧ Exz ∧ Eyz)\}$ is a set of sentences in $L$.
\end{definition}
For simple referencing we give a number to the new axiom.
\begin{align}
	∀x∃y∃z (Exy ∧ Exz ∧ Eyz)
	\label{LastAx}
\end{align}

\begin{theorem}{}
\label{LAxThm}
Let $M$ be a connected model and $κ$ and $λ$ be infinite cardinals.
Then $M\modelsΣ$ if and only if $M\cong J'(κ,λ)$.
\begin{proof}
First assume $M\cong J'(κ,λ)$.
We know by Theorem \ref{AxThm} that there exist $v,a,b$ pairwise adjacent such that $ϕ(v,a,b)$ holds for all $ϕ∈Σ_c$.
By Theorem \ref{InfiniteEF} we have that any two $J'$ with infinite parameters are elementarily equivalent, so we can assume $κ=λ$.
This allows us to have automorphisms that interchange rows and columns.
Then we have for any triangle  $v',a',b'∈M$ that there exists an automorphism $F$ of $M$ such that $F(v)=v'$, $F(a)=a'$ and $F(b)=b'$.
So $M\models ϕ(v',a',b')$ for all $ϕ∈Σ_c$.
For any vertex  $u$ in $J'(κ,λ)$,  let  $x∈u$ and  $y,z∈X∖u$  with $(y≠z)$.
	We have that $u∖\{x\}∪\{y\}$ and $u∖\{x\}∪\{z\}$ are two adjacent neighbours of $u$ so (\ref{LastAx}) holds in $J'(κ,λ)$.
Thus $M\models Σ$.

Now assume that $M$ is a connected model such that $M\models Σ$.
Take $v∈M$. Then (\ref{LastAx}) gives us that $v$ has two adjacent neighbours $a$ and $b$.
Then by taking $v,a,b$ as witnesses for the formulae in $Σ$, \ref{AxThm} gives us that $M\cong J'(κ,λ)$.
\end{proof}
\end{theorem}

\begin{corollary}
	\label{GenJohnsonThm}
	$M\models Σ$ if and only if $M$ is a {\GenPhrase}.
	\begin{proof}
	First assume that $M\models Σ$. Then (\ref{LastAx}) guarantees that every connected component has a triangle and therefore by Theorem \ref{LAxThm} we have that each connected component is isomorphic to $J'(κ,λ)$ for some infinite $κ$ and $λ$. Thus $M$ satisfies the definition of {\GenPhrase}.
	Conversly if we assume $M$ is a {\GenPhrase}, then by Theorems \ref{InfiniteEF} and  \ref{LAxThm} we get that $M\models Σ$.
	\end{proof}
\end{corollary}
\begin{corollary}
	\label{TJAxThm}
	$Σ$ is a complete set of axioms for $T_{\Johnson}$.
	\begin{proof}
		This follows directly from  Theorem \ref{InfiniteEF} and Corollary \ref{GenJohnsonThm}.
	\end{proof}
\end{corollary}

We note that Theorem \ref{AxiomResultsIntro} follows from Corollaries \ref{TJAxThm} and \ref{GenJohnsonThm}.
This gives us a nice description of the models of $T_{\Johnson}$, namely that they are exactly the  {\GenPhrase}s.
This is particularly useful since it tells us that an elementary extension of a {\GenPhrase} is also a {\GenPhrase}.
In the next section we will leverage this fact to prove the stability of $T_{\Johnson}$.

\section{Stability}

In this section we prove that $T_\Johnson$ is an $ω$-stable theory of Morley rank $ω$ (Theorem \ref{Stability}).
We first prove that the Morley rank of balls in {\GenPhrase} grows with their radius, giving us a lower bound of $ω$.
The key observation to get the upper bound of $ω$ is that any finite union of balls of finite radius has finite Morley rank.

\begin{lemma}
\label{LongDistanceLemma}
Let $M\models T_{\Johnson}$, and  $ϕ(x,(c_i)_{i=1}^n)$ be a formula with $(c_i)_{i=1}^n ∈M^n$.
Then there is $r=r(ϕ)$ such that for any $d_1$, $d_2∈M$
	both at a distance greater than $r$ from each $c_i$, we have $M\models ϕ(d_1,\bar{c}) \leftrightarrow ϕ(d_2,\bar{c})$.
	\begin{proof} (Sketch)
We will prove this by playing an Ehrenfeucht-Fraïssé game $EF_{\floor{\log(r)}}(M,M)$, where $r$ is chosen sufficiently large.
As in the proof of Theorem \ref{ThmEF}, we will refer to the move made in the left copy of $M$ in the $i$-th round as $a_i$ and the move made in the right copy of $M$ as $b_i$.
Assume that for the first $n$ moves $a_i=b_i=c_i$ and $a_{n+1}=d_1$ and $b_{n+1}=d_2$.
Now duplicator forms a partial function $ f_{n+1} $
	such that for any two  $c_i,c_j$ such that $d(c_i,c_j)<r$ we have that $c_i\symdiff c_j$ is in the domain and image of $f_{n+1}$.
This can be achieved by having $f_{n+1}$ be the identity map everywhere it is defined.
We now note that $(a_i)_{i=1}^{n+1}$, $(b_i)_{i=1}^{n+1}$ and $f_{n+1}$ satisfy an induction hypothesis similar
 to the one used to prove Theorem~\ref{ThmEF}.
Note that $d_1 \symdiff c_i$ and $d_2\symdiff c_i$ are not explicitly added to the domain and image of $f_{n+1}$ for any $i$.
This is due to the fact that if duplicator would be playing the strategy from Theorem \ref{InfiniteEF}
	then the set of played  vertices sufficiently close to $d_1$ and $d_2$ to warrant an update would be empty by our assumption that
	the distance from either $d_1$ or $d_2$ to any $c_i$ is greater than $r$.

So this is a possible intermediate state of the game where the game is played for $\floor{\log(r)}$ rounds.
	So as we have shown in the proof of Theorem \ref{ThmEF} duplicator has a winning strategy.
If we now say that $\floor{\log(r)}$ is at least the quantifier depth of $ϕ$ we know by the Ehrenfeucht–Fraïssé Theorem that 
	$M\models ϕ(d_1,\bar{c}) \leftrightarrow ϕ(d_2,\bar{c})$.
\end{proof}
\end{lemma}
\begin{corollary}
	If $M$ is a {\GenPhrase} then all vertices of $M$ have the same $1$-type.
	\begin{proof}
		Let $d_1,d_2 \in M$. By Lemma~\ref{LongDistanceLemma} (applied without any $c_i$), $d_1$ and $d_2$ have the same type.
	\end{proof}
\end{corollary}

\begin{theorem}
	Let $M$ be a {\GenPhrase}, let $N$ be a non-empty union of connected components of $M$.
	Then $N$ is an elementary substructure of  $M$.
	\begin{proof}
		Let $ϕ$ and $\bar{c}:=(c_i)_{i=1}^n∈N^n$  be  such that $M\models∃x(ϕ(x,\bar{c}))$.
		Let $d$ be a witness i.e. $M\models ϕ(d,\bar{c})$.
		If $d∈N$ we are done.
		Assume $d∉N$. Then by Lemma \ref{LongDistanceLemma} there is an $r_{ϕ}$ such that
			for any $e∈M$ such that $\min(d(e,c_i))≥r_{ϕ}$ we have $M\modelsϕ(d,\bar{c})\iff ϕ(e,\bar{c})$.
		Since $N$ has infinite diameter there is such an $e$ in $N$.
		So by the Tarski-Vaught test $N$ is an elementary substructure of $M$.
	\end{proof}
\end{theorem}

\begin{definition}
	We say that a set of vertices in a {\GenPhrase} is \emph{small} if it is contained in a finite union of balls of finite radius.
	We say that a set is \emph{large} if it is not small.
\end{definition}

\begin{lemma}
	\label{MorleyBall}
	Let $B$ be a ball of radius $r$ in a {\GenPhrase} $M$.
	Then $B$ has Morley rank $2r$.
	\begin{proof}
		Since parameters in a different connected component from  $B$ are irrelevant we can assume that
		$M=J'(κ,λ)$ for some infinite cardinals $κ$ and $λ$.
		This could be seen by an Ehrenfeucht-Fraïssé argument similar to the one in Lemma \ref{LongDistanceLemma}.
		Alternatively, observe that the argument below uses automorphism groups, and that the pointwise
		stabiliser in $\operatorname{Aut}(M')$ of $M'∖ M$ induces the whole of $\operatorname{Aut}(M)$.

		Let $v$ be the center of our ball $B$ and $a,b∈N(v)$ such that $Eab$.
		From Section \ref{AxChap} we know that using those parameters we can define the column containing $a$ in $N(v)$ as a set $C$ and similarly
		the row containing $a$ is a definable set $R$.
		For all $n∈ℕ$ let $D_n:=\{u∈M:d(u,v)=n\}$, a definable set.
		Finally there is a definable bijection $f_n\times g_n$ for each $n∈ℕ$ such that   $D_n \mapsto \binom{C}{n} \times \binom{R}{n}$.
		Both components, $f_n$ and $g_n$, are definable.
%
		Since the domains of the $f_n$ for each $n$ are pairwise distinct we define $f=⋃_{i∈ℕ} f_n$, a mapping from $⋃_{n∈ℕ}D_n$ to the power set of $C$.
		Similarly $g=⋃_{i∈ℕ}g_n$ is a mapping from $⋃_{n∈ℕ}D_n$ to the power set of  $R$.

		Let $G$ be the pointwise stabilizer of $v,a$ and $b$ in $\operatorname{Aut}(M)$.
		We then have that $G$ fixes $R$ and $C$ as sets and induces at least
		$\operatorname{Sym}(C∖\{a,b\}) \times \operatorname{Sym}(R∖\{a,b\})$.
	Let $F⊆M$ be a finite set of parameters and $H$ be an $F$-definable subset of $C∪R$.
		Let 	$X=⋃_{c∈F} f(c)$ and
			$Y=⋃_{c∈F} g(c)$.
		Now take the pointwise stabilizer $G_{(X∪Y)}$ of $X∪Y$.
		Note that since each element of $F$ has coordinates from $X∪Y$ we have that  $G_{(X∪Y)}$ fixes $F$ pointwise and therefore fixes $H$ setwise.
		We also have that $G_{(X∪Y)}$ is transitive on $C∖(X∪\{a,b\})$.
		Thus $H∩C$ is finite or cofinite in $C$.

		Note that, since $M$ is a connected component, naming parameters in $M'∖M$ where $M'$ is an elementary extension of $M$ does not affect this finiteness, or cofiniteness.
		It follows that both the finiteness and cofinitenss are uniform across any family of definable sets.
		Thus $C$ has Morley rank $1$ and Morley degree $1$, i.\,e.\ $C$ is strongly minimal.
		Similarly $R$ is strongly minimal.

	Since $B$ is a $v$-definable set and every type over $v$ consistent with the formula defining $B$ is realised by some $e∈B$  we have $MR(B)=\max\{MR(e/v):e∈B\}$.
	Now consider a vertex  $e$ such that $d(e,v)=r$ and assume that $(f(e)∪g(e))∩\{a,b\}=∅$.
	Let $\bar{u}$ and $\bar{w}$ be the tuples from $R^r$ and $C^r$ representing  coordinates of $e$ i.e. $\bar{u}$ is a tuple representing $f(e)$ and $\bar{w}$ is a tuple representing $g(e)$.
	Then working over $vab$ the element $e$ is definable from $\bar{u}\bar{w}$,
		and $\bar{u}\bar{w}$ has finitely many conjugates over $e$.
	So by \cite[6.4.1]{MTCourse} we have that $MR(e/vab)=MR(\bar{u}\bar{w}/vab)$.
	Observe that $C$ and $R$ are strongly minimal and therefore $MR(C^t \times R^s)=t+s$
		so in particular $MR(C^r \times R^r)=2r$.
		Hence $MR(\bar{u}\bar{w}/vab)=2r=MR(e/vab)$, so $MR(B)=2r$ as required.

	\end{proof}
\end{lemma}

\begin{lemma}
	\label{SmallMorleyFinite}
	Let $ϕ$ define a small set.
	Then $ϕ$ has finite Morley rank.
	\begin{proof}
		Let $C$ be the set defined by $ϕ$.
		Since $C$ is small we have  $C⊆⋃_{i=1}^{n}B_i$ where $B_i$ are balls of finite radius.
		Let $r_i$ be the radius of $B_i$.
		Then by Lemma \ref{MorleyBall}   $B_i$ has Morley rank $2r_i$.
		Since for all $i$, $B_i$ is definable  the Morley rank of $C$ is at most $\max\{2r_i: 1≤i≤n\}$.
	\end{proof}
\end{lemma}

\begin{theorem}
	\label{T_J-MR}
	The Morley rank of a {\GenPhrase} is $ω$.
	\begin{proof}
		We note that {\GenPhrase}s contain a ball of radius $r$ for any $r∈ℕ$.
		So by Lemma \ref{MorleyBall} such a graph has Morley rank at least $ω$.
		To show that the Morley rank is $ω$ we need to show that there do not exist infinitely many pairwise disjoint definable sets each of Morley rank $ω$.
		By Lemma \ref{SmallMorleyFinite} such a collection would need to consist of large sets.
		So it is sufficient to show that there does not exist such a collection.

		Let $A$ and $B$ be large definable sets defined by $ϕ(x,(a_i)_{i∈I})$ and $ψ(x,(b_i)_{i∈J})$ respectively.
		Since $A$ and $B$ are large, by Lemma \ref{LongDistanceLemma} there are some $r_a$ and $r_b$ such that
		$ϕ(x,\bar{a})$ holds for any $x$ such that $\min\{d(x,a_i):i∈I\}≥r_a$ and
		$ψ(x,\bar{b})$ holds for any $x$ such that $\min\{d(x,b_i):i∈J\}≥r_b$.
		Let $r=\max(r_a,r_b)$. Then the set  $C:=\{x \colon \min(x,a_i)≥r ∧ \min(x,b_i)≥r\}$
		is a large set such that $C⊆A∩B$.
		As this holds for any large definable sets any two large definable sets intersect.
		In particular there does not exist a collection of infinitely many pairwise disjoint definable large sets.
	\end{proof}
\end{theorem}

\begin{corollary}
	\label{Stability}
	The theory $T_{\Johnson}$ is $ω$-stable so in particular $T_{\Johnson}$ is dependent.
\end{corollary}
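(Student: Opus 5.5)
The plan is to read off $\omega$-stability from Theorem \ref{T_J-MR} and then to get dependence from the general implication stable $\Rightarrow$ dependent. Recall that a complete countable theory is $\omega$-stable exactly when every formula has ordinal-valued Morley rank. Equivalently, the home sort $x=x$ has Morley rank $<\infty$ when computed in an $\aleph_0$-saturated model.

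First we fix a model. Since $T_{\Johnson}$ is complete (Corollary \ref{CompleteTheory}), Morley rank is computed in any $\aleph_0$-saturated (or monster) model $\mathbb{M}\models T_{\Johnson}$. By Corollary \ref{GenJohnsonThm} such a model is itself a {\GenPhrase}. Theorem \ref{T_J-MR} then applies directly and gives $MR(x=x)=\omega$ in $\mathbb{M}$.

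Next we transfer this to formulas and types. Every formula $\varphi(x,\bar c)$ with parameters in $\mathbb{M}$ has $MR(\varphi)\le MR(x=x)=\omega$, so every formula has ordinal Morley rank. The language is countable. By the standard fact (e.g.\ \cite{MTCourse}, Theorem 6.2.7/6.2.18 style) that a countable complete theory in which $x=x$ has ordinal Morley rank is $\omega$-stable, we get $|S_1(A)|\le\aleph_0$ for countable $A$.

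For completeness we sketch that fact. If $|S_1(A)|>\aleph_0$ for some countable $A$, the usual Cantor--Bendixson/binary-tree argument produces a tree $\{\varphi_s : s\in 2^{<\omega}\}$ of consistent formulas over $A$ with $\varphi_{s0},\varphi_{s1}$ disjoint refinements of $\varphi_s$. One then shows inductively that each $\varphi_s$ has Morley rank $\infty$, which contradicts $MR(x=x)=\omega$. Passing from $1$-types to $n$-types is standard, so $T_{\Johnson}$ is $\omega$-stable.

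Finally, $\omega$-stable theories are stable, and stable theories are dependent (NIP). If some formula had the independence property, then, by compactness, we could realise $2^{\aleph_0}$ types over a countable set, which contradicts $\omega$-stability.

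The only substantive point is making sure Theorem \ref{T_J-MR} is applied in a sufficiently saturated model. This is handled by Corollary \ref{GenJohnsonThm}, since every model, and in particular every saturated elementary extension, is a {\GenPhrase}. Everything after that is citation of standard results.
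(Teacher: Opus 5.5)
Your proposal is correct and follows the same route as the paper. There the corollary is read off directly from Theorem \ref{T_J-MR}, using that every model of $T_{\Johnson}$ (in particular a saturated one) is a jumbled Johnson graph and that ordinal Morley rank in a countable language gives $\omega$-stability, with stable $\Rightarrow$ dependent then standard; your sketches of the binary-tree argument and of how the independence property would yield $2^{\aleph_0}$ types over a countable set are accurate, but they only fill in textbook facts the paper leaves implicit.
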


We now show that this cannot be extended to monadic dependence.
Recall that
a theory is {\em monadically dependent} if any expansion of any model of $T$ by unary predicates is dependent.

\begin{theorem}
The Johnson graph $J(\aleph_0,2)$ is not monadically dependent.
\begin{proof}
We view the Johnson graph as a structure in the language $L$ of graphs (with a single binary relation $E$).
Let $ℕ$ be the underlying set of $J(\aleph_0,2)$. We define the following sets of vertices.
$$A=\{\{2n+1,0\}:n∈ ℕ\},$$
$$B=\{\{2n+2,0\}:n∈ ℕ\},$$
$$C=\{\{2n+1, 2m+2\}: m,n∈ ℕ\}.$$

Let $ψ(x,y,z)$ be the formula `$x,y,z$ are distinct and form a maximal clique of $J(\aleph_0,2)$', i.\,e.
$$\psi(x,y,z):= Exy ∧ Exz ∧ Eyz ∧ ¬∃w (Exw ∧ Eyw ∧Ezw).$$
Let $L^{A,B,C}$ be the expansion of $L$ by unary predicates interpreted by $A,B,C$.
Let $χ(x,y,z)$ be the $L^{A,B,C}$-formula $ψ(x,y,z)∧ A(x)∧ B(y)∧ C(z)$.

Thus $χ$ determines a bijection $A\times B \to C$, with $χ$ holding precisely for the triples
$(\{2n+1,0\}, \{2m+2,0\}, \{2n+1,2m+2\})$.
It follows that each element of $C$ determines via $χ$ a bipartite graph on $A ∪ B$ with a single edge,
	and each subset $C'$ of $C$ determines a bipartite graph on $A ∪ B$,
	and all bipartite graphs arise in this way.
In particular, by naming a suitable subset $C'$ of $C$ by a further unary predicate, we can define a bipartite graph witnessing the independence property (e.\,g.~the random bipartite graph).
\end{proof}
\end{theorem}

The above argument can be easily adjusted to prove the following statements.
\begin{corollary}
	\begin{itemize}
	\item
		Let $k>1$ be a natural number and $κ$ an infinite cardinal.
		Then $J(κ,k)$ is not monadically dependent.
	\item
		Let $G$ be a {\GenPhrase}. Then $G$ is not monadically dependent.
	\item
		The theory $T_{\Johnson}$ is not monadically dependent.
	\end{itemize}
\end{corollary}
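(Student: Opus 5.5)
The plan is to adapt the construction from the proof that $J(\aleph_0,2)$ is not monadically dependent. In each case I will find, inside the given graph, three sets $A,B,C$ of vertices and an $L$-formula $\psi(x,y,z)$ such that $\psi(x,y,z)\wedge A(x)\wedge B(y)\wedge C(z)$ defines the graph of a bijection $A\times B\to C$ with $A,B$ infinite. Once this is done, naming an arbitrary subset $C'\subseteq C$ by a further unary predicate defines an arbitrary bipartite graph on $A\cup B$, in particular one with the independence property. Since monadic dependence of a theory fails as soon as one model fails it, the three items reduce to producing such a configuration in $J(\kappa,k)$, in an arbitrary \GenPhrase, and in one model of $T_{\Johnson}$.

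For $J(\kappa,k)$ with $k>1$ finite, fix a $(k-2)$-set $S$ and a point $0\notin S$ of the underlying set, and fix disjoint countable sets $\{p_n\}$, $\{q_m\}$ avoiding $S\cup\{0\}$. Take $A=\{S\cup\{0,p_n\}\}$, $B=\{S\cup\{0,q_m\}\}$ and $C=\{S\cup\{p_n,q_m\}\}$. Three pairwise adjacent $k$-sets either share a common $(k-1)$-set, and then lie in a clique of unbounded size, or all lie inside a common $(k+1)$-set, and then form a maximal clique of size $k+1$. So instead of the triangle formula $\psi$ used before, I would take $\psi(x,y,z)$ to say that $x,y,z$ are pairwise adjacent and have no common neighbour adjacent to a vertex outside a bounded clique. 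More concretely, I would express that $x,y,z$ lie in a clique of the ``$(k+1)$-set'' type, namely that $\{x,y,z\}$ is not contained in a clique of size $k+2$. This condition is first order for fixed $k$. The triple $(S\cup\{0,p_n\},\,S\cup\{0,q_m\},\,S\cup\{p_n,q_m\})$ lies in the $(k+1)$-set $S\cup\{0,p_n,q_m\}$. Conversely, given $x\in A$ and $y\in B$, the elements of $C$ contained in $x\cup y$ form exactly one vertex, so $\chi$ is again the graph of a bijection.

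For the second item, first reduce to one component, since a predicate can be placed inside a single $J'(\kappa_i,\lambda_i)$. Here $k$ is infinite, so the $(k+1)$-set trick is not directly available; instead I would use the local coordinates from Section~\ref{AxChap}. Fix a central vertex $v$, a point $0\in v$, and disjoint sequences $p_n\in v\setminus\{0\}$ and $q_m\notin v$. Put $A=\{(v\setminus\{p_n\})\cup\{q_0'\}\}$ with $q_0'$ a fixed point outside $v$ distinct from all $q_m$, and so on. The cleanest choice mirrors the finite case inside $D_1$ and $D_2$. Take $A=\{v\setminus\{p_n\}\cup\{r\}\}$ and $B=\{v\setminus\{s\}\cup\{q_m\}\}$ for fixed $r,s$, which are a row and a column of the rook's graph on $N(v)$. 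Take $C=\{v\setminus\{p_n,s\}\cup\{r,q_m\}\}\subseteq D_2$. The vertex of $C$ attached to a pair $(x,y)\in A\times B$ is the unique common neighbour at distance $2$ from $v$ of $x$ and $y$ (via axioms (\ref{OppCorner}) and (\ref{Enn})), which is definable from $v$. This gives an $L(v)$-formula. The parameter $v$ is harmless for monadic dependence: since the graph is vertex-transitive, I can either name $\{v\}$ by a unary predicate or quantify it out after restricting via predicates.

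The third item then follows because every model of $T_{\Johnson}$ is a \GenPhrase\ by Corollary~\ref{GenJohnsonThm}. The main obstacle is the uniqueness check for $C$ in the infinite case, namely that the common neighbour of $x\in A$ and $y\in B$ inside $D_2$ is unique and lies in $C$. I would verify this by direct computation with symmetric differences. $x$ and $y$ differ from $v$ in disjoint coordinates, so their common neighbours at distance $2$ must carry both modifications, and that determines the vertex uniquely.
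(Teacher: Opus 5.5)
Your proposal is correct and is essentially the adjustment the paper has in mind. The paper's proof consists only of the remark that the $J(\aleph_0,2)$ argument (coding a bijection $A\times B\to C$ by a formula, then naming a subset of $C$) adapts, and you carry that adaptation out, correctly noticing that for $k\geq 3$ and for infinite parameters the ``maximal triangle'' formula has to be replaced. A simplification: once $A$, $B$, $C$ are named, the condition $Exz\wedge Eyz$ alone already cuts out the bijection in both of your configurations, so neither the clique-size condition nor the parameter $v$ is actually needed.
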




\section{Open questions}
Johnson graphs are the most easily defined of several parameterised families of finite distance-transitive graphs, and it would be interesting to carry out a similar analysis of the model theory of other such families as the parameters grow.
The most obvious examples are Hamming graphs $H(d,q)$ for which the vertex set has form $S^d$ where $S$ is a set of size $q$ with two vertices adjacent if they agree in all but one coordinate.
We remark that the infinitary hypercube is shown to be $ω$-stable in Proposition 4.10 of \cite{VC-I}.
The infinitary hypercube is the `finite support' induced subgraph of $H(ω,2)$. 

There is a further class of graphs whose limit theory would be worth exploring.
Let $𝔽$ be a finite field. Then the graph $\operatorname{PGJ}(n,k,\mathbb F)$ has as vertices all of the $k$-dimensional subspaces of an $n$-dimensional vector space over $𝔽$, and two vertices are adjacent if they intersect in a $(k-1)$-dimensional space.
For a fixed $𝔽$, if the limit theory of such a class of graphs is complete then a further goal is to describe its infinite models and their stability-theoretic properties.

Other natural classes of graphs to investigate from this point of view are the `set-defined graphs' considered in \cite{Adenwalla-etal2026}. These are graphs definable just from the equality relation in a pure set. The goal would be to investigate families of set-defined graphs in {\em finite} sets, where there is a natural family structure (like `all Johnson graphs') but certain parameters are unbounded.

\bibliographystyle{Heimildastíll}
\bibliography{Heimildir}
\end{document}